\theoremstyle{plain}
\newtheorem{lem}{Lemma}[section]
\newtheorem{cor}[lem]{Corollary}
\newtheorem{prop}[lem]{Proposition}
\newtheorem{thm}[lem]{Theorem}
\theoremstyle{definition}
\newtheorem{defn}[lem]{Definition}
\newtheorem{ex}[lem]{Example}
\newtheorem{rmk}[lem]{Remark}
\newtheorem{notn}[lem]{Notation}
\newtheorem{fact}[lem]{Fact}
\theoremstyle{remark}
\newtheorem{case}{Case}
\newtheorem{subcase}{Case}[case]
\newcommand{\mingen}{\operatorname{mingen}}
\newcommand{\Hom}{\operatorname{Hom}}
\newcommand{\s}{\mathfrak{S}}
\newcommand{\tor}{\operatorname{Tor}}
\newcommand{\im}{\operatorname{Im}}
\newcommand{\Cl}{\operatorname{Cl}}
\newcommand{\Ker}{\operatorname{Ker}}
\newcommand{\ideal}[1]{\mathfrak{#1}}
\newcommand{\m}{\ideal{m}}
\newcommand{\p}{\ideal{p}}
\newcommand{\q}{\ideal{q}}
\newcommand{\fa}{\ideal{a}}
\newcommand{\fb}{\ideal{b}}
\newcommand{\sfk}{\mathsf k}
\newcommand{\ti}{\tilde}
\newcommand{\bbz}{\mathbb{Z}}
\newcommand{\bbn}{\mathbb{N}}
\newcommand{\from}{\leftarrow}
\newcommand{\xra}{\xrightarrow}
\newcommand{\vf}{\varphi}
\newcommand{\x}{\mathbf{x}}
\newcommand{\gb}{\beta}
\newcommand{\gd}{\delta}
\newcommand{\gl}{\lambda}
\renewcommand{\geq}{\geqslant}
\renewcommand{\leq}{\leqslant}
\renewcommand{\nleq}{\nleqslant}
\renewcommand{\ker}{\Ker}
\newcommand{\Ext}[4][R]{\operatorname{Ext}_{#1}^{#2}(#3,#4)}
\newcommand{\Otimes}[3][R]{#2\otimes_{#1}#3}
\renewcommand{\Hom}[3][R]{\operatorname{Hom}_{#1}(#2,#3)}	
\newcommand{\Tor}[4][R]{\operatorname{Tor}^{#1}_{#2}(#3,#4)}
\newcommand{\ssm}{\smallsetminus}
\numberwithin{equation}{lem}
\begin{document}

\bibliographystyle{amsplain}

\author{Sean K. Sather-Wagstaff}

\address{Department of Mathematical Sciences,
Clemson University,
O-110 Martin Hall, Box 340975, Clemson, SC 29634
USA}

\email{ssather@clemson.edu}

\urladdr{https://ssather.people.clemson.edu/}

\author{Tony Se}

\address{Department of Mathematics,
University of Mississippi,
Hume Hall 305, P.O. Box 1848, University, MS 38677
USA}

\email{ttse@olemiss.edu}

\urladdr{http://math.olemiss.edu/tony-se/}

\author{Sandra Spiroff}

\address{Department of Mathematics,
University of Mississippi,
Hume Hall 305, P.O. Box 1848, University, MS 38677
USA}

\email{spiroff@olemiss.edu}

\urladdr{http://math.olemiss.edu/sandra-spiroff/}

\thanks{
Sandra Spiroff was supported in part by Simons Foundation Collaboration Grant 245926.}

\title{On semidualizing modules of ladder determinantal rings} 

\date{\today}


\keywords{divisor class group, ladder determinantal ring, semidualizing module}
\subjclass[2010]{
13C20, 
13C40, 
}

\begin{abstract} 
We identify all semidualizing modules over certain classes of ladder determinantal rings over a field $\sfk$. Specifically, given a ladder of variables $Y$, we show that the ring $\sfk[Y]/I_t(Y)$ has only trivial semidualizing modules up to isomorphism 
in the following cases: (1) $Y$ is a one-sided ladder, and (2) $Y$ is a two-sided ladder with $t=2$ and no coincidental inside corners. 
\end{abstract}

\maketitle



\section*{Introduction} \label{sec0}


Let $R$ be a commutative noetherian ring and let $\sfk$ be a field.  A finitely generated $R$-module $C$ is \textbf{semidualizing} if $\Hom CC\cong R$ and $\Ext iCC=0$ for all $i\geq 1$. 
The set of isomorphism classes of semidualizing $R$-modules is denoted $\s_0(R)$.
See Section~\ref{sec180114a} for background information on these modules.

Semidualizing modules arise in several different contexts.
Hans-Bjorn Foxby~\cite{foxby:gmarm} introduced them to provide a useful generalization of the dualities with respect to a free module of rank 1 
and with respect to a dualizing/canonical module. 
Other applications include progress 
by Luchezar Avramov and Foxby~\cite{avramov:rhafgd} and Sean Sather-Wagstaff~\cite{sather:cidfc}
on composition questions for local ring homomorphisms, and advances on a question of Craig Huneke on growth of Bass numbers of
local rings by Sather-Wagstaff~\cite{sather:bnsc}. 

Despite the utility of semidualizing modules, very little is known about the set $\s_0(R)$.
Only recently have Saeed Nasseh and Sather-Wagstaff~\cite{nasseh:gart} shown that this set is finite.
Anders Frankild and Sather-Wagstaff show that the set has even cardinality when $R$ is local, complete, Cohen-Macaulay, and not Gorenstein
in~\cite{frankild:sdcms}.
At this time, we only have more information than this in very special cases: 
Olgur Celikbas and Hailong Dao~\cite{celikbas:ncdfcmlr} deal with certain Veronese subrings;
William Sanders~\cite{MR3433990} handles some rings of invariants;
Sather-Wagstaff treats determinantal rings in~\cite{SSW};
and Nasseh, Sather-Wagstaff, and Ryo Takahashi~\cite{nasseh:vetfp,nasseh:hplrq} handle the rings that specialize to non-trivial fiber products
(this includes the well-known but seemingly undocumented result for rings of minimal multiplicity).

In particular, the following question~\cite[Question~4.13]{SSW} of Sather-Wagstaff is still open:
If $R$ is a local ring, must the cardinality $|\s_0(R)|$ be a power of 2?
Each of the special cases in the previous paragraph answers this question in the affirmative for its certain class of rings.
In fact, in most cases the rings admit only \textbf{trivial semidualizing modules}, namely, the free module of rank 1 and a dualizing module;
exceptions occur for determinantal rings with coefficients in non-Gorenstein rings.

We provide 
more special-case
evidence of an affirmative answer to Sather-Wag\-staff's question
by studying the semidualizing modules of ladder determinantal rings.  
Roughly speaking, a ladder is a subset $Y$ of an $m \times n$ matrix $X$ of indeterminates that (possibly) excludes matrix entries from the top left and/or bottom right, as in the 
examples depicted below.

\begin{align*}
\begin{smallmatrix}
\displaystyle X_{11}&\displaystyle X_{12}&\displaystyle X_{13}&\displaystyle X_{14}&\displaystyle X_{15} \\
\displaystyle X_{21}&\displaystyle X_{22}&\displaystyle X_{23}&\displaystyle X_{24}&\displaystyle X_{25} \\
\displaystyle X_{31}&\displaystyle X_{32}&\displaystyle X_{33}&\displaystyle X_{34}&\displaystyle X_{35} \\
\displaystyle X_{41}&\displaystyle X_{42}&\displaystyle X_{43}&\displaystyle X_{44} \\
\displaystyle X_{51}&\displaystyle X_{52}&\displaystyle X_{53}
\end{smallmatrix}
&&&&
\begin{smallmatrix}
&\displaystyle X_{12}&\displaystyle X_{13} \\
&\displaystyle X_{22}&\displaystyle X_{23} \\
\displaystyle X_{31}&\displaystyle X_{32}&\displaystyle X_{33} \\
\displaystyle X_{41}&\displaystyle X_{42} \\
\displaystyle X_{51}&\displaystyle X_{52}
\end{smallmatrix}
\end{align*}\label{ladders:OT}

\hskip.7cm\text{$O$:  one-sided \cite[Example 4.10]{Co}} \hskip1.1cm\text{$T$: ladder with coincidental corner} 
\bigskip

The associated ladder determinantal ring of $t$-minors
is $R_t(Y) = \sfk[Y]/I_t(Y)$, where $I_t(Y)$ is the ideal generated by the $t \times t$ minors of $X$ lying entirely in $Y$.  See the paper of Aldo Conca~\cite{Co} and our Section~\ref{sec180114a} 
for background on these rings, including information on their divisor
class groups that is crucial for our work.

The main results of Sections~\ref{sec180114b} and~\ref{sec180114c}
of the current paper are as follows. 
They show that many ladder determinantal rings have only trivial semidualizing modules.
See, however, part~II of this work \cite{SWSeSp2} for the study of ladder determinantal rings with non-trivial semidualizing modules; Example \ref{multipleladders}(3) contains a sample computation.

\

\noindent
{\bf One-Sided Ladder Theorem }
(Theorem~\ref{thm:onesidedpathconnected}).
{\it Let $Y$ be a one-sided ladder\footnote{In Definition~\ref{defn:onetwosided}, we require all of our one- and two-sided ladders to be path-connected.}.  The ring $R_t(Y)$ has only trivial semidualizing modules, i.e., $|\s_0(R_t(Y))| \leq 2$.}

\

For two-sided ladders, we focus specifically on the
$2 \times 2$ case.  

\

\noindent
{\bf 
Two-Sided Ladder Theorem ($t = 2$, no coincidental corners)}  (Theorem~\ref{thm:mixed}). 
{\it Let $Y$ be a 2-connected ladder such that no lower inside corner and upper inside corner coincide.
Then the ring $R_2(Y)$ has only trivial semidualizing modules, i.e., $|\s_0(R_2(Y))| \leq 2$.}

\


\section{Background}
\label{sec180114a}


\subsection*{Divisor Class Groups}

For a normal domain $R$, the isomorphism class of an $R$-module $M$ is denoted $[M]$,
and the set of isomorphism classes of rank-1 reflexive modules is the
\textbf{divisor class group} of $R$, denoted $\Cl(R)$.
This is an abelian group under the operations $[M]+[N]=[(M\otimes_RN)^{**}]$, where $(-)^*=\Hom -R$, 
and $[M]-[N]=[\Hom NM]$, with additive identity $[R]$.  Equivalently, $\Cl(R)$ is the set of isomorphism classes of height-1 reflexive ideals.

\subsection*{Semidualizing Modules/Ideals}
Recall the definition of the semidualizing property and the notation $\s_0(R)$ from the introduction of this paper.  By~\cite[Proposition~3.4]{SSW}, if $R$ is a normal domain, then each semidualizing $R$-module is
reflexive of rank 1, so there is an inclusion $\s_0(R)\subseteq\Cl(R)$.  A {\bf semidualizing ideal} is an ideal of the ring $R$ that is semidualizing as an $R$-module.

\begin{rmk} For our purposes, it is important to note that the property of being semidualizing is preserved under localization, since the defining conditions are preserved by flat base change.  
\end{rmk}

\begin{fact} \cite[Proposition 3.3]{SSW} \label{fact:multisiso}
Let $\fa$ and $\fb$ be semidualizing ideals such that $\fa \otimes \fb$
is semidualizing. The natural multiplication map $\fa \otimes_R \fb \to
\fa \fb$ is an isomorphism.
\end{fact}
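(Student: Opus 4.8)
The plan is to observe that the multiplication map is automatically surjective, and then to kill its kernel by a rank count, the hypothesis on $\fa\otimes_R\fb$ entering only through torsion-freeness. Write $\mu\colon\fa\otimes_R\fb\to R$ for the map $a\otimes b\mapsto ab$. By construction its image is the product ideal $\fa\fb$, so $\mu$ factors as a surjection $\fa\otimes_R\fb\onto\fa\fb$; set $K=\ker\mu$, so that $0\to K\to\fa\otimes_R\fb\xra{\mu}\fa\fb\to 0$ is exact. In the situation at hand $R$ is a normal domain, so by the facts recalled above (namely \cite[Proposition~3.4]{SSW}) every semidualizing $R$-module is reflexive of rank $1$; in particular $\fa$ and $\fb$ have rank $1$, and --- using the hypothesis --- $\fa\otimes_R\fb$ is reflexive, hence torsion-free.

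Next I would compute ranks by localizing at the zero prime. Let $Q$ denote the fraction field of $R$. Since $\fa\fb$ is a nonzero ideal of the domain $R$, we have $\fa\fb\otimes_R Q\cong Q$; and since $\fa$ and $\fb$ have rank $1$,
\[
(\fa\otimes_R\fb)\otimes_R Q\cong(\fa\otimes_R Q)\otimes_Q(\fb\otimes_R Q)\cong Q .
\]
Thus $\mu\otimes_R Q$ is a surjection of one-dimensional $Q$-vector spaces, hence bijective, and therefore $K\otimes_R Q=0$; that is, $K$ is a torsion $R$-module.

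Finally, $K$ is a submodule of $\fa\otimes_R\fb$, which is torsion-free, so $K$ is torsion-free as well; being simultaneously torsion and torsion-free, $K=0$, and $\mu$ is an isomorphism. I do not expect a genuine obstacle in carrying this out; the one point worth emphasizing is that the conclusion really does need the hypothesis that $\fa\otimes_R\fb$ be semidualizing: for general reflexive ideals $\fa,\fb$ the tensor product $\fa\otimes_R\fb$ can carry torsion, and that torsion is exactly $\ker\mu$, so the hypothesis is used precisely, and only, to force $\fa\otimes_R\fb$ to be torsion-free.
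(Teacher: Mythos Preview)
Your proof is correct. The paper itself does not prove this statement; it is recorded as a fact quoted from \cite[Proposition~3.3]{SSW} with no argument given, so there is nothing in the present paper to compare your approach against. Your route---observe that $\mu$ surjects onto $\fa\fb$, compute ranks at the generic point to see that $\ker\mu$ is torsion, and then invoke the semidualizing hypothesis on $\fa\otimes_R\fb$ solely to obtain torsion-freeness via reflexivity over the normal domain---is the standard argument in this setting and is what one expects the cited proposition to contain. Your closing remark that the hypothesis is used exactly to kill the torsion in $\fa\otimes_R\fb$ (and for nothing else) is on point.
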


A \textbf{dualizing $R$-module} is a semidualizing $R$-module of finite injective dimension.
The ring $R$ has a dualizing module if and only if it is Cohen-Macaulay and a homomorphic image of a Gorenstein ring with finite Krull dimension; 
see~\cite{foxby:gmarm,reiten:ctsgm,sharp:gmccmlr}.
Thus, a dualizing module is  a canonical module over a Cohen-Macaulay ring. To say that a ring $R$ admits only {\bf trivial semidualizing modules} means $\s_0(R) = \{[R], [\omega_R] \}$ if $R$ has a dualizing module $\omega_R$, and it means that $\s_0(R)=\{[R]\}$ if $R$ does not have a dualizing module.

\begin{fact}\label{fact:tensorgivesomega} If 
$R$ is Cohen-Macaulay with a dualizing module $\omega_R$, and if $C$ is a semidualizing $R$-module, then
$\Hom C{\omega_R}$ is semidualizing.  Moreover, 
the natural evaluation map
$\gamma\colon\Otimes{\Hom C{\omega_R}}C\to\omega_R$ given by $\gamma(\varphi\otimes c)=\varphi(c)$ is an isomorphism, 
and $\Tor i{\Hom C{\omega_R}}C =0$ for all $i\geq 1$; see~\cite[Theorem~2.11, Proposition~4.4, and Observation~4.10]{christensen:scatac}.
If, in addition, $R$ is a normal domain and  $C\neq\omega_R$ are height-1 reflexive ideals,
then $\Hom C{\omega_R}$ is naturally isomorphic to a height-1 reflexive ideal $C'$, and we have
$\omega_R\xra[\gamma^{-1}]{\cong}\Otimes C{C'} \xra{\cong}CC'$, where the second isomorphism is the multiplication map from Fact~\ref{fact:multisiso}.
\end{fact}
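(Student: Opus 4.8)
The plan is to deduce the first three assertions from a computation in the derived category, exploiting that $\omega_R$ is the canonical module of a Cohen--Macaulay ring of finite Krull dimension and hence has finite injective dimension, and then to read off the last assertion from the divisor class group together with Fact~\ref{fact:multisiso}. Since the vanishing of the relevant $\operatorname{Ext}$ and $\operatorname{Tor}$ modules and whether the relevant maps are isomorphisms may all be checked after localizing at each maximal ideal of $R$, and since the semidualizing and dualizing properties localize, I would first reduce to the case that $R$ is local. Recall then the standard fact that a semidualizing module over a Cohen--Macaulay local ring is maximal Cohen--Macaulay; see~\cite{christensen:scatac}. Applying this to $C$, local duality gives $\operatorname{Ext}^i_R(C,\omega_R)=0$ for all $i\geq1$, so $\rhom_R(C,\omega_R)$ is represented by the finitely generated module $\Hom C{\omega_R}$, which I will denote $C^\dagger$; it is again maximal Cohen--Macaulay.

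The heart of the argument is the Hom-evaluation morphism
\[
C\lotimes_R\rhom_R(C,\omega_R)\longrightarrow\rhom_R\bigl(\rhom_R(C,C),\omega_R\bigr).
\]
Because $C$ is homologically finite and $\omega_R$ has finite injective dimension, this morphism is an isomorphism in the derived category; see~\cite{christensen:scatac}. As $C$ is semidualizing, the homothety $R\to\rhom_R(C,C)$ is an isomorphism, so applying $\rhom_R(-,\omega_R)$ identifies the target above with $\rhom_R(R,\omega_R)=\omega_R$. Hence $C^\dagger\lotimes_R C\simeq\omega_R$ in the derived category; since $\omega_R$ is a module, this yields $\operatorname{Tor}^R_i(C^\dagger,C)=0$ for all $i\geq1$ together with an isomorphism $C^\dagger\otimes_R C\cong\omega_R$, and unwinding the explicit description of the Hom-evaluation map shows that this isomorphism is $\gamma$. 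Feeding this back through Hom-tensor adjunction gives
\[
\rhom_R(C^\dagger,C^\dagger)\simeq\rhom_R\bigl(C^\dagger\lotimes_R C,\omega_R\bigr)\simeq\rhom_R(\omega_R,\omega_R)\simeq R,
\]
the last isomorphism holding because the canonical module is semidualizing; one checks that the resulting map $R\to\rhom_R(C^\dagger,C^\dagger)$ is the homothety, so $\Hom C{\omega_R}=C^\dagger$ is semidualizing.

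For the final assertion, assume in addition that $R$ is a normal domain and that $C\neq\omega_R$ are height-$1$ reflexive ideals, so that $[C],[\omega_R]\in\Cl(R)$. By the preceding paragraph $C^\dagger=\Hom C{\omega_R}$ is semidualizing, hence by~\cite[Proposition~3.4]{SSW} it is reflexive of rank $1$; under the group law on $\Cl(R)$ its class equals $[\omega_R]-[C]$, which differs from $[R]$ because $C\not\cong\omega_R$, so $C^\dagger$ is naturally isomorphic to a genuine height-$1$ reflexive ideal $C'$. Now $C$ and $C'\cong C^\dagger$ are semidualizing ideals whose tensor product $C\otimes_R C'\cong\omega_R$ is semidualizing, so Fact~\ref{fact:multisiso} shows the multiplication map $C\otimes_R C'\to CC'$ is an isomorphism; composing with the isomorphism $\gamma\colon C\otimes_R C'\cong C\otimes_R C^\dagger\xra{\cong}\omega_R$ of the previous step produces the desired chain $\omega_R\xra[\gamma^{-1}]{\cong}C\otimes_R C'\xra{\cong}CC'$.

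The step I expect to be the main obstacle is verifying that the abstract derived isomorphism $C^\dagger\lotimes_R C\simeq\omega_R$ is induced on degree-zero homology by the concrete map $\varphi\otimes c\mapsto\varphi(c)$: its existence is formal once one knows that Hom-evaluation is invertible in this setting and that $C$ is semidualizing, but identifying it with $\gamma$ requires chasing the explicit Hom-evaluation morphism through the adjunction isomorphisms. The other ingredient imported rather than proved here is that a semidualizing module over a Cohen--Macaulay local ring is maximal Cohen--Macaulay, which is what makes $C^\dagger$ a module concentrated in a single degree; both points are carried out carefully in~\cite{christensen:scatac}.
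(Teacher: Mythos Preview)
The paper does not supply its own proof of this statement: it is recorded as a \emph{Fact} and simply cites \cite[Theorem~2.11, Proposition~4.4, and Observation~4.10]{christensen:scatac} for the first three assertions, with the final assertion about normal domains left as an immediate consequence of the cited results together with Fact~\ref{fact:multisiso} and the definition of the group law on $\Cl(R)$. Your write-up is a faithful reconstruction of the argument underlying those citations---the derived-category Hom-evaluation isomorphism, applied with $\omega_R$ of finite injective dimension, is precisely the mechanism Christensen uses---so there is nothing to compare beyond noting that you have unpacked what the paper leaves to the reference.

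One small point: in the last paragraph you pass from the hypothesis ``$C\neq\omega_R$'' to ``$C\not\cong\omega_R$'' when arguing that $[\omega_R]-[C]\neq 0$ in $\Cl(R)$. Distinct height-$1$ ideals can be isomorphic, so strictly speaking this needs the stronger hypothesis; in the paper's usage (where ideals are being treated via their divisor classes) the intended reading is that $[C]\neq[\omega_R]$, and your argument is then fine.
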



\subsection*{Ladder Determinantal Rings}


We will recall the terminology and results in \cite{Co} and \cite{CoGor}
and also introduce some new terminology.

Let $X=(X_{ij})$ be an $m\times n$ matrix of indeterminates.
A \textbf{ladder} in $X$ is a subset $Y$ satisfying the following property:
if $X_{ij},X_{pq}\in Y$ satisfy $i\leq p$ and $j\leq q$, then $X_{iq},X_{pj}\in Y$.  Recall that $R_t(Y) = \sfk[Y]/I_t(Y)$ is the associated \textbf{ladder determinantal ring}, where $I_t(Y)$ is the ideal generated by the $t \times t$ minors of $X$ lying entirely in $Y$.  
As in~\cite[p.~121(b)]{Co}, to avoid trivialities, we assume without loss of generality that $X_{m1},X_{1n}\in Y$ and furthermore
that each row of $X$ contains an element of $Y$, as does  each column of $X$.
One such ladder is as follows.
\begin{equation}\label{eq180304a}\tag{L}
\begin{smallmatrix}
&\displaystyle X_{12}&\displaystyle X_{13}&\displaystyle X_{14}&\displaystyle X_{15} \\
&\displaystyle X_{22}&\displaystyle X_{23}&\displaystyle X_{24}&\displaystyle X_{25} \\
\displaystyle X_{31}&\displaystyle X_{32}&\displaystyle X_{33} \\
\displaystyle X_{41}&\displaystyle X_{42}&\displaystyle X_{43} \\
\displaystyle X_{51}&\displaystyle X_{52}
\end{smallmatrix}
\end{equation}
Herzog and Trung~\cite[Corollary~4.10]{MR1185786} show that the ring $R_t(Y)$ is Cohen-Macaulay,
and it is a normal domain by~\cite[Proposition~3.3]{Co}.
For each $X_{ij}\in Y$, we let $x_{ij}$ denote its residue in $R_t(Y)$.

The \textbf{lower inside corners} of $Y$ are the points $(a,b)$ with $X_{ab}, X_{a-1 b}, X_{a b-1} \in Y$, but $X_{a-1 b-1}  \in X \ssm Y$; these are denoted $X_{a_i b_i}$, or simply $(a_i, b_i)$, with $1 <a_1 <\cdots<a_h <m$. For notational convenience, we also set $(a_0,b_0)=(1,n)$ and $(a_{h+1},b_{h+1})=(m,1)$.  Likewise, the \textbf{upper inside corners} of a ladder $Y$ are the points $(c,d)$ such that $X_{cd}, X_{c+1 d}, X_{c d+1} \in Y$, but $X_{c+1 d+1}  \in X \ssm Y$; these are denoted $X_{c_j d_j}$, or simply $(c_j, d_j)$, with $1 <c_1 <\cdots<c_k <m$. 
The ladder $Y$ has \textbf{coincidental corners} if $(a_i,b_i)=(c_j,d_j)$ for some $i,j$.
For notational convenience, we also set $(c_0,d_0)=(1,n)$ and $(c_{k+1},d_{k+1})=(m,1)$.

For instance, the ladder~\eqref{eq180304a} above has $h=1$ and $k=2$, with $(a_0,b_0) = (1,5) = (c_0,d_0)$ and $(a_2,b_2) = (5,1) = (c_3,d_3)$, and the variables at inside corners are boxed in the next display.
\begin{align*}
&\begin{smallmatrix}
&\displaystyle X_{12}&\displaystyle X_{13}&\displaystyle X_{14}&\displaystyle X_{15} \\
&\displaystyle X_{22}&\displaystyle X_{23}&\displaystyle X_{24}&\displaystyle X_{25} \\
\displaystyle X_{31}&\fbox{$\displaystyle X_{32}$}&\displaystyle X_{33} \\
\displaystyle X_{41}&\displaystyle X_{42}&\displaystyle X_{43} \\
\displaystyle X_{51}&\displaystyle X_{52}
\end{smallmatrix}
&&\begin{smallmatrix}
&\displaystyle X_{12}&\displaystyle X_{13}&\displaystyle X_{14}&\displaystyle X_{15} \\
&\displaystyle X_{22}&\fbox{$\displaystyle X_{23}$}&\displaystyle X_{24}&\displaystyle X_{25} \\
\displaystyle X_{31}&\displaystyle X_{32}&\displaystyle X_{33} \\
\displaystyle X_{41}&\fbox{$\displaystyle X_{42}$}&\displaystyle X_{43} \\
\displaystyle X_{51}&\displaystyle X_{52}
\end{smallmatrix}
\\
&\hspace{0.5cm}\text{lower inside corners}
&&\hspace{0.5cm}\text{upper inside corners}
\end{align*}
One point of identifying the inside corners is to describe $\Cl(R_t(Y))$.  In particular, for $t = 2$, 
the following ideals of $R_2(Y)$ are height-1 primes by~\cite[Proposition~2.1 and Corollary~2.3]{Co}:
\begin{align*}
\p_j
&=(x_{pq}\in R_2(Y)\mid\text{$p\leq c_j$ and $q\leq d_j$})&j&=1,\ldots,k
\\
\q_i
&=(x_{a_{i-1}q}\in R_2(Y))&i&=1,\ldots,h+1
\\
\q_i'
&=(x_{pb_{i}}\in R_2(Y))&i&=1,\ldots,h+1.
\end{align*} \label{defn:pq}%

\begin{fact} \label{omega} The following facts were established in \cite{Co}.
\begin{enumerate}
\item $\Cl(R_2(Y))$ is a free abelian group of rank $h+k+1$ with basis $[\q_1]$, \ldots, $[\q_{h+1}]$,
$[\p_1]$, \ldots, $[\p_k]$, \cite[Corollary 2.3]{Co}. 
\item With $t=2$, set $\lambda_i = a_i + b_i - a_{i-1} - b_{i-1}$ for all $i = 1, \dots, h+1$ and $\delta_j = a_{i_j} + b_{i_j}-c_j-d_j$ for all $j = 1, \dots, k$, where $i_j = \min\{i :  a_i > c_j \}$. Then the canonical class is $[\omega_R] = \sum_{i=1}^{h+1} \lambda_i [\q_i] + \sum_{j=1}^k \delta_j[\p_j]$ by~\cite[Proposition~2.4]{Co}. 
\item The relations between the classes of the ideals $\q_i$, $\q_i'$, $\p_j$, described in the proof of \cite[Corollary 2.3(i)]{Co}, are as follows. For all $i = 1, \dots, h+1$ if $I_i = \{j : \text{$1 \leq j \leq k$, $a_{i-1}\leq c_j$, and $b_i \leq d_j$} \}$, where $I_i$ may be empty, then $[\q_i] + [\q_i'] + \sum_{j \in I_i} [\p_j] = 0$.
\end{enumerate}
\end{fact}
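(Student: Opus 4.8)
Items (1)--(3) are \cite[Corollary~2.3 and Proposition~2.4]{Co}, so strictly there is nothing new to prove; I indicate the shape of the arguments, since the combinatorics they encode is exactly the input to the two main theorems. The organizing principle is that $R_2(Y)$ is a normal semigroup ring: sending $X_{ij}\mapsto u_iv_j$ identifies it with the subalgebra of $\sfk[u_1,\dots,u_m,v_1,\dots,v_n]$ generated by the monomials $u_iv_j$ with $X_{ij}\in Y$, and under this identification $\Cl$, $\omega_R$, and the Hilbert series are governed by the defining cone, whose facets and interior lattice points are dictated by the staircase shape of $Y$.

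For (1) and (3) the tool is Nagata's theorem on divisor class groups under localization. The plan is first to exhibit an element $f\in R_2(Y)$ --- a suitable product of residues of variables along the ``spine'' of $Y$ --- such that $R_2(Y)[f^{-1}]$ is a localization of a polynomial ring over $\sfk$; this uses the fact that vanishing of all $2\times2$ minors of $X$ forces the residues to satisfy $x_{pq}x_{p'q'}=x_{pq'}x_{p'q}$, hence to factor rationally (generically $x_{pq}=u_pv_q$) once enough entries are inverted. A polynomial ring is a UFD, so $\Cl(R_2(Y)[f^{-1}])=0$, and Nagata's theorem gives a surjection from the free abelian group on the minimal primes of $(f)$ onto $\Cl(R_2(Y))$. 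By \cite[Proposition~2.1 and Corollary~2.3]{Co} those minimal primes are exactly the height-$1$ primes $\p_1,\dots,\p_k$, $\q_1,\dots,\q_{h+1}$, $\q_1',\dots,\q_{h+1}'$, giving $2(h+1)+k$ generators. Plugging in the relations of (3) to eliminate $[\q_1'],\dots,[\q_{h+1}']$ leaves the $h+k+1$ classes listed in (1); that no further relations exist --- so that $\Cl(R_2(Y))$ is \emph{free} of rank $h+k+1$ --- is a rank count, cleanest from the semigroup model, where $\Cl$ is the cokernel of the map from the character lattice to the free group on the facets of the cone.

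For the relations in (3), I would compute, for each $i$, the principal divisor of the residue $x_{a_{i-1}b_i}$ of the left-most variable of row $a_{i-1}$ (the staircase shape guarantees $(a_{i-1},b_i)\in Y$). By the generic rank-one degeneration, the zero locus of $x_{a_{i-1}b_i}$ splits --- each component with multiplicity one --- as $V(\q_i)$ (``row $a_{i-1}$ collapses''), $V(\q_i')$ (``column $b_i$ collapses''), together with one copy of $V(\p_j)$ for each $j$ for which the shape of $Y$ obstructs completing that row or column past the upper inside corner $(c_j,d_j)$, i.e.\ exactly $j\in I_i=\{\,j : a_{i-1}\leq c_j,\ b_i\leq d_j\,\}$. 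Hence $\operatorname{div}(x_{a_{i-1}b_i})=[\q_i]+[\q_i']+\sum_{j\in I_i}[\p_j]$ vanishes in $\Cl(R_2(Y))$.

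For (2): $R_2(Y)$ is Cohen--Macaulay and normal, so it has a canonical module $\omega_R$, necessarily a height-$1$ reflexive ideal, hence a well-defined class $[\omega_R]\in\Cl(R_2(Y))$ that one then rewrites in the basis of (1). I would obtain it from Conca's computation of $\omega_R$, which in the semigroup model is the ideal spanned by the interior lattice points of the cone; reading off the order of that module along each extremal prime divisor produces precisely the ``perimeter'' quantities $\lambda_i=a_i+b_i-a_{i-1}-b_{i-1}$ and $\delta_j=a_{i_j}+b_{i_j}-c_j-d_j$ with $i_j=\min\{\,i : a_i>c_j\,\}$. The main obstacle in all three parts is combinatorial rather than homological: one must translate the rank-one degeneration of the generic matrix --- equivalently, the geometry of the semigroup cone --- into the precise staircase data of $Y$, determining which inside corners contribute to each principal divisor and to $\omega_R$ and with what multiplicity; granted that dictionary, (1)--(3) follow from standard class-group machinery.
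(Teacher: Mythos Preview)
The paper gives no proof of this statement: it is recorded as a \emph{Fact}, with each item attributed to the cited results of Conca~\cite{Co} and nothing more. So there is no ``paper's own proof'' to compare against; the authors simply import the conclusions.

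Your sketch is a reasonable reconstruction of Conca's arguments and is broadly correct. The toric/semigroup model $X_{ij}\mapsto u_iv_j$ is indeed available for $t=2$ and underlies much of~\cite{Co}; Nagata's theorem applied after inverting a product of strategically chosen $x_{ij}$'s is exactly how Conca reduces $\Cl(R_2(Y))$ to a finite list of height-one primes; and your identification of $\operatorname{div}(x_{a_{i-1}b_i})$ as the source of the relation in~(3) matches the mechanism in the proof of~\cite[Corollary~2.3(i)]{Co} (note $(a_{i-1},b_i)$ is the outside corner of the upper-left staircase between consecutive lower inside corners, so $X_{a_{i-1}b_i}\in Y$ as you assert). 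The one place where your sketch is thinner than Conca's actual argument is the freeness/rank statement in~(1): you gesture at a rank count via the semigroup cone, which works, but Conca's proof is more hands-on, showing by successive localizations that the asserted basis really is independent. For~(2), Conca does not use the interior-lattice-point description of $\omega_R$ directly; he instead computes orders of vanishing along the $\q_i$ and $\p_j$ of a specific generator, but the output is the same $\lambda_i,\delta_j$ you record.

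In short: nothing to correct, and nothing to compare---the paper defers entirely to~\cite{Co}, and your outline is consistent with what one finds there.
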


For the specific  ladder~\eqref{eq180304a} above, we have
\begin{align*}
\p_1
&=(x_{12},x_{13},x_{22},x_{23}) & & & \delta_1 = 3+2-2-3 \\
\p_2
&=(x_{12},x_{22},x_{31},x_{32},x_{41},x_{42}) & & & \delta_2 = 5+1-4-2 \\
\q_1
&=(x_{12},x_{13},x_{14},x_{15}) & & & \lambda_1 = 3+2-1-5 \\
\q_2
&=(x_{31},x_{32},x_{33}) & & & \lambda_2 = 5+1-3-2 \\
\q_1'
&=(x_{12},x_{22},x_{32},x_{42},x_{52}) \\
\q_2'
&=(x_{31},x_{41},x_{51}) \\
\Cl(R_2(L))&\cong\bbz^4\cong \bbz[\p_1]\oplus\bbz[\p_2]\oplus\bbz[\q_1]\oplus\bbz[\q_2] \\
[\omega_{R_2(L)}]&=[\q_2] - [\q_1].
\end{align*}

A ladder $Y$ is {\bf $\pmb t$-disconnected} \cite[page 457]{CoGor} if there exist two subladders $\varnothing \neq Z_1, Z_2 \subseteq Y$ such that $Z_1 \cap Z_2 = \varnothing$, $Z_1 \cup Z_2 = Y$, and every $t$-minor of $Y$ is contained in $Z_1$ or $Z_2$. 
In this case, we say that $Z_1,Z_2$ form a {\bf $\pmb t$-disconnection} of $Y$.
A ladder
$Y$ is {\bf $\pmb t$-connected} if it is not $t$-disconnected.
(For instance, the ladder~\eqref{eq180304a} above is 2-connected and is vacuously $t$-disconnected 
for each $t\geq 3$ since it has no 3-minors.  The one-sided ladder $O$ from the introduction is 2- and 3-connected, but not 4-connected.) 
A \textbf{block submatrix} of $Y$ is a rectangular subladder, that is, a subset of $Y$
consisting of all the $X_{pq}$ with $u \leq p \leq v$ and $r \leq q \leq s$ for some $u,v,r,s$.

We use the following natural definitions below to compute semidualizing modules in some one-sided cases where $Y$
is not $t$-connected.

\begin{defn}
  A \textbf{path} in a ladder $Y$ is a nonempty (but possibly one-element)
  list of variables
  $X_{i_0j_0}, X_{i_1j_1}, \dots, X_{i_{\ell}j_{\ell}}$
  in $Y$, such that for all $0\leq u < \ell$, either
  \begin{enumerate}
    \item $i_u=i_{u+1}$ and $|j_u-j_{u+1}|=1$, or
    \item $j_u=j_{u+1}$ and $|i_u-i_{u+1}|=1$.
  \end{enumerate}
  In such a case, we say that there is a path \textbf{from $X_{i_0j_0}$ to $X_{i_{\ell}j_{\ell}}$},
  or \textbf{between $X_{i_0j_0}$ and $X_{i_{\ell}j_{\ell}}$}. 
  We write
  $X_{i_0j_0}\sim X_{i_{\ell}j_{\ell}}$ to denote that there is a path from
  $X_{i_0j_0}$ to $X_{i_{\ell}j_{\ell}}$.  

  The \textbf{path-components} of a ladder $Y$ are the equivalence classes of
  $\sim$. 
  A ladder $Y$ is \textbf{path-connected} if there is a path between any
  two variables in $Y$, or equivalently, $Y$ has only one path-component.
  A ladder is \textbf{path-disconnected} if it is not path-connected.
\end{defn}

\begin{lem}
  Every path-component of a ladder is also a ladder.
\end{lem}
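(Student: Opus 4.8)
The plan is to verify directly that a path-component $Z$ of the ladder $Y$ satisfies the defining closure property: whenever $X_{ij}, X_{pq} \in Z$ with $i \leq p$ and $j \leq q$, we must show $X_{iq}, X_{pj} \in Z$. Since $Z \subseteq Y$ and $Y$ is a ladder we already know $X_{iq}, X_{pj} \in Y$, so the real content is that these two ``corner'' variables lie in the \emph{same} path-component as $X_{ij}$ and $X_{pq}$. The degenerate cases $i = p$ and $j = q$ are immediate (one of the corners equals one of the given variables), so assume $i < p$ and $j < q$.

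First I would fix a path in $Y$ from $X_{ij}$ to $X_{pq}$, say $X_{ij} = X_{i_0 j_0}, X_{i_1 j_1}, \dots, X_{i_r j_r} = X_{pq}$. Since consecutive row indices differ by at most $1$ and the sequence of row indices runs from $i$ to $p$, its range is an interval containing $[i,p]$; hence for each $l$ with $i \leq l \leq p$ there is a path vertex $X_{lc} \in Y$ in row $l$, for some column $c$ depending on $l$. The key point is that row $l$ then contains a variable of $Y$ in some column $\leq q$: if $c \leq q$ this is $X_{lc}$ itself, while if $c > q$ then $j < c$, and applying the ladder property of $Y$ to the (NW--SE comparable) pair $X_{ij}, X_{lc}$ yields $X_{lj} \in Y$ with $j \leq q$. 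Feeding this row-$l$ variable (with column $c' \leq q$) together with $X_{pq}$ into the ladder property once more (using $l \leq p$ and $c' \leq q$) produces $X_{lq} \in Y$. Thus $X_{lq} \in Y$ for every $l$ with $i \leq l \leq p$; consecutive ones are grid-adjacent, so they form a path in $Y$, and since $X_{pq} \in Z$ this entire vertical segment lies in $Z$ --- in particular $X_{iq} \in Z$. The symmetric argument, with the roles of rows and columns exchanged (legitimate because the ladder property and the notion of path are invariant under this interchange), gives $X_{pj} \in Z$. Hence $Z$ is a ladder.

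The step I expect to be the main (if mild) obstacle is resisting the temptation to first straighten the chosen path into a monotone rightward/downward staircase inside the rectangle $[i,p] \times [j,q]$ and read the corners off of that: establishing such a monotonization is more work than it is worth, and the argument above shows it is unnecessary. The trick is to forget where the path actually travels and use only that it must meet every intermediate row, then invoke the ladder property of the ambient $Y$ twice --- once to pull an ``overshooting'' vertex (one in a column $> q$) back to column $j$, and once to push the resulting row-$l$ variable out to column $q$. Finally, I would remark that ``ladder'' here refers only to the closure property of the opening definition, so no verification of the normalizing conventions (each row and column meeting $Z$, the corner entries present) is needed; those can always be arranged afterward by restricting to the submatrix of $X$ on the rows and columns that meet $Z$.
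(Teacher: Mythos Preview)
Your argument is correct. The paper takes a shorter route: given $X_{ij},X_{pq}\in Y_1$ with $i\le p$ and $j\le q$, it observes that the \emph{entire} block submatrix with corners $X_{ij}$ and $X_{pq}$ lies in $Y$ (this uses the standing convention that every row and column of $X$ meets $Y$), so the four corner variables are trivially joined by paths inside that block and hence all lie in $Y_1$. Your proof reaches the same end by constructing only one edge of this block---the column-$q$ segment $X_{iq},X_{(i+1)q},\dots,X_{pq}$---and you supply the needed ``row $l$ meets $Y$'' information not from the standing convention but from the path itself, which must visit every intermediate row. What your approach buys is independence from that convention (so the lemma holds for arbitrary ladders in the bare sense of the closure property); what the paper's approach buys is brevity. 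Your closing remark about not needing to verify the normalizing conventions for $Z$ is consistent with this, and your instinct to avoid monotonizing the path is sound: as you note, neither proof needs it.
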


\begin{proof}
   Let $Y$ be a ladder and $Y_1$ a path-component of $Y$. Let $X_{ij},X_{pq} \in Y_1$
  with $i \leq p$ and $j \leq q$. Let $Z$ be the block submatrix of $X$ with corners
  $X_{ij}$ and $X_{pq}$. By the defining condition for $Y$ being a ladder, we have
  $Z \subseteq Y$, so there are paths in $Y$ between all of the variables
  $X_{ij},X_{pq},X_{iq},X_{pj}$. Hence $X_{iq},X_{pj} \in Y_1$, and $Y_1$
  is a subladder of $Y$.
\end{proof}

The ladder $Y$ in Example~\ref{ex180710a} shows that the converse of the next result fails with $t=3$;
examples for other $t$-values are similarly easy to construct.

\begin{lem} \label{lem:connectivity}
  A $t$-connected ladder is path-connected for any $t>0$.
\end{lem}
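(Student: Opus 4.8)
The plan is to prove the contrapositive: if $Y$ is path-disconnected, then it is $t$-disconnected for every $t>0$. The engine of the argument is a preliminary observation: \emph{whenever $X_{ij},X_{pq}\in Y$ with $i\leq p$ and $j\leq q$, the rectangular block $B=\{X_{kl} : i\leq k\leq p,\ j\leq l\leq q\}$ is contained in $Y$, and hence $X_{ij}$ and $X_{pq}$ lie in a common path-component.} This follows from the defining condition of a ladder together with the standing convention that every row and every column of $X$ meets $Y$: one first checks that a horizontal (resp.\ vertical) segment joining two elements of $Y$ in a common row (resp.\ column) lies entirely in $Y$ -- given $X_{ij},X_{iq}\in Y$ and $j<l<q$, choose $X_{kl}\in Y$ in column $l$ and apply the ladder condition to $\{X_{kl},X_{iq}\}$ if $k\leq i$ and to $\{X_{ij},X_{kl}\}$ if $k\geq i$ -- and then one fills in all of $B$ from its boundary rows and columns. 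Since a full rectangular block is visibly path-connected, the last assertion follows.

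Granting this, fix a path-component $Y_1$ of $Y$ and set $Z_1=Y_1$ and $Z_2=Y\ssm Y_1$. These are disjoint with union $Y$, and both are nonempty since $Y$ has at least two path-components. That $Z_1$ is a subladder is the preceding lemma. For $Z_2$: if $X_{ij},X_{pq}\in Z_2$ with $i\leq p$ and $j\leq q$, the only case needing attention is $i<p$ and $j<q$ (otherwise $\{X_{iq},X_{pj}\}\subseteq\{X_{ij},X_{pq}\}$ already), and there the observation places $X_{ij},X_{iq},X_{pj},X_{pq}$ in one path-component, which is not $Y_1$; hence $X_{iq},X_{pj}\in Z_2$.

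The key step is that every $t$-minor of $Y$ lies in a single path-component. Such a minor is specified by a choice of rows $r_1<\cdots<r_t$ and columns $s_1<\cdots<s_t$ with all $t^2$ entries $X_{r_as_b}$ in $Y$; in particular $X_{r_1s_1},X_{r_ts_t}\in Y$, so by the observation the rectangular block they span lies in $Y$ and is path-connected. Since this block contains every $X_{r_as_b}$, all entries of the minor lie in one path-component, hence entirely in $Z_1$ or entirely in $Z_2$. Thus $Z_1,Z_2$ form a $t$-disconnection of $Y$, contradicting $t$-connectedness. (When $t=1$ the conclusion is immediate, a $1$-minor being a single variable.)

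I do not anticipate a real obstacle. The one point to get right is the preliminary observation, and in particular that it genuinely uses the convention that every row and column of $X$ meets $Y$: without that convention, rectangles need not fill in and the statement can fail. Everything afterward is routine bookkeeping with path-components.
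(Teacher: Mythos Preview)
Your proof is correct and follows essentially the same line as the paper's: take a path-component $Y_1$ and its complement, then use that the rectangular block spanned by the extreme corners of a $t$-minor lies in $Y$ and is path-connected, so no minor can straddle the two pieces. The paper phrases this as a contradiction (if $Y$ were $t$-connected some minor would cross, but then the block it spans would enlarge $Y_1$), while you phrase it as a direct verification that $Y_1,\,Y\ssm Y_1$ give a $t$-disconnection; the content is the same. You are more explicit on two points the paper leaves terse: (i) that filling in the block genuinely uses the standing convention that every row and column of $X$ meets $Y$, and (ii) that the complement $Y\ssm Y_1$ is itself a subladder. Both observations are correct and worth making.
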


\begin{proof}
  The only 1-connected ladder is the one consisting of one variable, so the result is easy in this case. 
  Thus, we let $t>1$ and assume that $Y$ is $t$-connected. By way of contradiction, suppose that
  $Y$ were path-disconnected.
  Let $Y_1$ be a path-component of $Y$ and $Y_2=Y\ssm Y_1$. Then
  $Y_1,Y_2$ are ladders. Since $Y$ is $t$-connected, 
  there is a $t$-minor given by a set of variables $M$ of $Y$ such that
  $M \cap Y_1 \neq \varnothing$ and
  $M \cap Y_2 \neq \varnothing$. Let $Z$ be the smallest block submatrix of $X$ that
  contains $M$. Then $Z \subseteq Y$, so $Y_1 \cup Z$ is path-connected and properly
  contains $Y_1$, a contradiction. Therefore $Y$ is path-connected.
\end{proof}

\begin{defn} \label{defn:onetwosided}
A ladder $Y$ is {\bf one-sided} if it is path-connected and $h=0$ or $k=0$, i.e.\ it has no lower inside corners or no upper inside corners.
When this is the case, we usually assume that $h=0$, by symmetry.
A ladder $Y$ is {\bf two-sided} if it is path-connected and $h,k>0$.
\end{defn}
Since all ladders in \cite{CoGor} are assumed to be $t$-connected, our definitions of
one-sided and two-sided ladders are compatible with those in \cite[pp.~457, 458]{CoGor}
 by Lemma~\ref{lem:connectivity}.


\section{One-Sided Ladders}
\label{sec180114b}


In this section, we prove the One-Sided Ladder Theorem from the introduction.
Note that the results of this section will be applied in the next section.
In particular, Lemmas~\ref{lem:purepower}--\ref{lem:intersection} apply to arbitrary 2-connected ladders (one- or two-sided).

We recall that the ideals $\q_i,\q'_i,\p_j$ were defined on page~\pageref{defn:pq}.

\begin{lem}\label{lem:purepower}
  Let $Y$ be a 2-connected ladder and set $R=R_2(Y)=\sfk[Y]/I_2(Y)$. Then for all $1 \leq i \leq h+1$,
  $1 \leq j \leq k$ and $e \in \bbn$, we have $\q_i^{(e)}=\q_i^e$,
  $(\q'_i)^{(e)}=(\q'_i)^e$ and $\p_j^{(e)}=\p_j^e$ in $R$.
\end{lem}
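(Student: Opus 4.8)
The plan is to show that each of these height-1 primes $\q_i$, $\q'_i$, $\p_j$ is generated by variables forming (after a reindexing) the "boundary" of a suitable block submatrix, and then to invoke the fact that the ordinary powers of such ideals are already symbolically closed in the relevant $2 \times 2$ determinantal setting. More precisely, each of $\q_i$, $\q'_i$ is generated by the entries of a single row or single column of the ladder lying in the appropriate block, and $\p_j$ is generated by all entries of a block submatrix $Z$ to the upper-left of an upper inside corner. In the polynomial ring $\sfk[Y]$, an ideal generated by a single row (or column) of variables is prime, and its ordinary powers coincide with its symbolic powers because $\sfk[Y]$ modulo that prime is again a polynomial ring (the quotient is a domain and the prime is generated by a regular sequence of variables, so the associated graded ring is a polynomial ring, hence a domain — this forces $\p^{(e)} = \p^e$). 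The content of the lemma is to transfer this from $\sfk[Y]$ to $R = R_2(Y)$.

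The key steps, in order, are: (1) Recall from~\cite[Proposition~2.1 and Corollary~2.3]{Co} that $\q_i$, $\q'_i$, $\p_j$ are height-1 primes in $R$, and identify explicitly their generating sets of variables. (2) For $\q_i = (x_{a_{i-1}q})$ and $\q'_i = (x_{pb_i})$: observe that $\q_i$ is generated by the images of the variables in a single row of $X$ restricted to $Y$, and similarly $\q'_i$ for a column. (3) Show that the preimage of $\q_i$ in $\sfk[Y]$ is the prime $\widetilde{\q_i}$ generated by those variables together with $I_2(Y)$, and argue $\sfk[Y]/\widetilde{\q_i} \cong R_2(Y')$ for a slightly smaller ladder $Y'$ — or more directly, argue that $R/\q_i^e$ has no embedded primes by a direct Rees-algebra / associated-graded argument: the associated graded ring $\bigoplus_e \q_i^e / \q_i^{e+1}$ is isomorphic to a polynomial ring over $R/\q_i$ in one variable, hence a domain, which gives $\q_i^{(e)} = \q_i^e$. (4) For $\p_j = (x_{pq} : p \le c_j,\ q \le d_j)$: this is the ideal of a block submatrix, and the corresponding statement is that $R/\p_j$ is a domain (known) and the Rees algebra of $\p_j$ is nice enough that $\gr_{\p_j}(R)$ is a domain; invoke~\cite{Co} or~\cite{CoGor} for the structure of $R$ modulo such a block ideal, which is again a ladder determinantal ring, together with the fact that $\p_j$ is generated by a "linear" system so the symbolic and ordinary Rees algebras agree.

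The main obstacle I expect is step (3)/(4): verifying that the associated graded ring (equivalently the ordinary Rees algebra) of each of these primes is a domain, since this is exactly the statement that distinguishes symbolic from ordinary powers. For $\q_i$ and $\q'_i$ — single row/column ideals — this should be reasonably clean: $\q_i$ is generated by a regular sequence modulo $R/\q_i$ (one can check the variables in that row form part of a system of parameters behaving well in $R$), so the blow-up is well-behaved and $\q_i^{(e)} = \q_i^e$. For $\p_j$ the argument needs the explicit description, from~\cite{Co}, of $R_2(Y)$ localized or quotiented appropriately; the cleanest route is probably to note that $\p_j$ is (up to the isomorphism of Fact~\ref{fact:multisiso}-type linearization) generated by entries of a block, that $I_2$ of that block together with the rest is prime, and that such ideals of $2\times 2$ minors — being generated in a way compatible with the generic determinantal case — have the property that the quotient by any power is Cohen-Macaulay, hence unmixed, so no symbolic "extra" component appears. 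In short: reduce to $\gr_{\p}(R)$ being a domain (or to $R/\p^e$ being unmixed), and appeal to the determinantal-ring technology in~\cite{Co, CoGor} to certify that.
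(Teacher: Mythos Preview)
Your overall target---showing that $\gr_J(R)$ is a domain, equivalently that $R/J^e$ is unmixed---is a valid sufficient condition, but your execution contains genuine errors and misses the short uniform argument the paper gives.

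Concretely: your claim that the generators of $\q_i$ form a regular sequence in $R$, and hence that $\gr_{\q_i}(R)$ is a polynomial ring over $R/\q_i$, is false. The variables $x_{a_{i-1},q}$ in a single row satisfy nontrivial relations in $R$ coming from the $2\times 2$ minors (for instance $x_{a_{i-1},q}\,x_{p,q'} = x_{a_{i-1},q'}\,x_{p,q}$), so they are not a regular sequence and the associated graded ring is not a polynomial ring. Your treatment of $\p_j$ is not a proof but an appeal to unspecified ``determinantal-ring technology'' to certify unmixedness of $R/\p_j^e$; that is precisely the content of the lemma, so this begs the question.

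The paper's route is much shorter and handles all three families at once, with no Rees-algebra machinery. Fix $J \in \{\q_i,\q'_i,\p_j\}$ and grade $\sfk[Y]$ by $\deg X_{pq}=1$ if $x_{pq}\in J$ and $\deg X_{pq}=0$ otherwise. The key observation is that every $2\times 2$ minor of $Y$ is \emph{homogeneous} for this grading (of degree $0$, $1$, or $2$), so $I_2(Y)$ is homogeneous and $R$ inherits the grading. Since $J$ is generated by exactly the degree-$1$ variables, one has $\bar f\in J^e$ if and only if the lowest degree of a nonzero homogeneous component of $\bar f$ is at least $e$. Now if $\bar f\in J^{(e)}$, pick $\bar g\notin J$ with $\bar f\bar g\in J^e$; comparing lowest-degree components and using that $R$ is a domain forces the lowest degree of $\bar f$ to be at least $e$, whence $\bar f\in J^e$.

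Incidentally, this grading argument does prove that $\gr_J(R)\cong R$ as graded rings (hence a domain), which is exactly your intended target---but it reaches it in a few lines rather than through case analysis and external structural input.
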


\begin{proof}
  Let $J$ denote any fixed ideal $\q_i$, $\q'_i$ or $\p_j$. For any polynomial
  $f \in \sfk[Y]$, we let $\bar{f}$ denote its residue class in $R$. We define a grading
  on $\sfk[Y]$ by letting $\deg(X_{ij})=1$ if $x_{ij} \in J$ and $\deg(X_{ij})=0$
  otherwise. We note that the generators $X_{i_1j_1}X_{i_2j_2}-X_{i_1j_2}X_{i_2j_1}
  \in I_2(Y)$, where $i_1 \leq i_2$ and $j_1 \leq j_2$, are homogeneous binomials of degree
  0, 1 or 2. Hence $R$ inherits the same grading from $\sfk[Y]$.

  Given a polynomial $f \in \sfk[Y]$, let us write $f=f_r+f_{r+1}+\dots+f_t$,
  where $f_s$ is homogeneous of degree $s$ for $r \leq s \leq t$. We note that
  whenever $e \in \bbn$ and $\bar{f_r}\neq 0$, we have $\bar{f}\in J^e$ if and only if
  $r \geq e$.

  Now fix $e \in \bbn$ and let $f \in \sfk[Y]$. Let $f=f_r+f_{r+1}+\dots+f_t$
  with $\bar{f_r}\neq 0$. Suppose that $\bar{f} \in J^{(e)}$. Then there is
  $g \in \sfk[Y]$ with $g=g_0+g_1+\dots+g_s$ and $\bar{g_0}\neq 0$,
  such that $\bar{f}\bar{g} \in J^e$. Since $R$ is a domain, we have
  $\bar{f_r}\bar{g_0} \neq 0$. Since $\deg(f_rg_0)=r$ and $\bar{f}\bar{g}
  \in J^e$, we have $r \geq e$. Hence $\bar f \in J^e$. Therefore $J^{(e)}=J^e$.
\end{proof}

The $\q_i$, $\q'_i$, $\p_j$ are residues in $R_2(Y)$ of ideals in $\sfk[Y]$ \cite[\S 2]{Co}.  In particular, $\q_i$ is the residue class of $Q_i = (X_{a_{i-1}, j} : {\text{ for all }} j {\text{ such that }} X_{a_{i-1}, j} \in Y) + I_2(Y)$.

\begin{lem} \label{lem:intersection}
  Let $Y$ be a 2-connected ladder. Let $J$ be any one of the ideals
  $Q_i^e$, $(Q'_i)^e$ or $P_j^e$, where $e \geq 1$. Let $M$ be any
  monomial ideal in $\sfk[Y]$. Then $\bar{J} \cap \bar{M} \subseteq
  R = R_2(Y)$ is generated by the least common multiples of the
  monomial generators of $J$ and $M$. In other words, $\bar{J} \cap
  \bar{M} = \overline{J \cap M}$.
\end{lem}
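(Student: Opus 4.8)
The plan is to reduce the statement to a purely monomial computation in the polynomial ring $\sfk[Y]$ and then transfer the conclusion along the quotient map $\sfk[Y]\onto R=R_2(Y)$. First I would recall that each of $Q_i$, $Q'_i$, $P_j$ has the form $L+I_2(Y)$, where $L$ is a monomial ideal of $\sfk[Y]$ generated by a subset of the variables; consequently $J$ (one of $Q_i^e$, $(Q'_i)^e$, $P_j^e$) equals $N+I_2(Y)$ for a monomial ideal $N$ (a pure power of a variable-ideal) and, taking $M$ to be the given monomial ideal, both $N$ and $M$ are monomial ideals of $\sfk[Y]$. The target identity $\bar J\cap\bar M=\overline{J\cap M}$ is equivalent to the statement that for $f\in\sfk[Y]$, the condition $\bar f\in\bar N\cap\bar M$ inside $R$ forces $\bar f\in\overline{N\cap M}$; the reverse containment is automatic.

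The key technical step is a normal-form argument for elements of $R$. The defining binomials $X_{i_1j_1}X_{i_2j_2}-X_{i_1j_2}X_{i_2j_1}$ of $I_2(Y)$ form (a subset of) a Gröbner basis for a suitable monomial order — for ladder determinantal ideals of $2$-minors the natural diagonal or antidiagonal term order works, as in Conca's and Herzog--Trung's treatment — so every element of $R$ has a unique normal form that is a $\sfk$-linear combination of monomials not divisible by any leading term of a defining binomial. I would use the multigrading by $\bbn^Y$: each binomial is homogeneous for the grading that records, for each \emph{row} and each \emph{column}, the total degree of the variables in that row/column, and more usefully, the straightening relations preserve the multiset of row-indices and the multiset of column-indices of a monomial. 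So if $\bar f=\bar u$ for a monomial $u$, then every monomial appearing in a normal form of $f$ has the same row-content and column-content as $u$. Membership of $\bar f$ in $\bar N$ (a power of a variable-ideal along a fixed row or column) is detected by the relevant row- or column-degree being $\geq e$, exactly as in the proof of Lemma~\ref{lem:purepower}; membership in $\bar M$ is detected monomial-by-monomial on the normal form because $M$ is a monomial ideal whose image we can test termwise. Combining these, if $\bar f\in\bar N\cap\bar M$, I would lift $f$ to its normal form $\sum_\alpha c_\alpha m_\alpha$, observe each $m_\alpha$ lies in $N$ (since it inherits the row/column degree bound) and each $m_\alpha\in M$ (since the normal form lies in $\bar M$ and $M$ is monomial), hence each $m_\alpha\in N\cap M=$ the lcm ideal, so $f\in N\cap M+I_2(Y)$, giving $\bar f\in\overline{J\cap M}$.

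A cleaner way to package the middle of this argument — avoiding repeated appeals to Gröbner bases — is to quotient further by the variables \emph{outside} the relevant row or column and by the variables outside the support of the monomials of $M$, reducing to the case where $R$ is replaced by a localization or a coordinate subring in which $N$ becomes a principal power of a single prime and the intersection $\bar N\cap\bar M$ is controlled by the valuation associated to that prime; this is the strategy already used for Lemma~\ref{lem:purepower}, and I would mirror its grading trick here, one variable-ideal at a time, then intersect. Either route works; I will present the grading/normal-form version since it is self-contained given what is already in the paper.

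The main obstacle I anticipate is establishing rigorously that $R$-membership in a monomial ideal can be tested termwise on a normal form — i.e., that the image in $R$ of a monomial ideal $M$ of $\sfk[Y]$ is spanned over $\sfk$ by the normal forms of monomials in $M$, and that a normal-form element lies in $\bar M$ iff each of its monomial terms does. This is where the compatibility of the straightening law with the $\bbn^Y$-multigrading (row- and column-content preservation) is doing all the work, and it must be invoked carefully: the relations are not $\bbn^Y$-homogeneous in the finest sense, but they are homogeneous for the coarser grading by row-content and by column-content, which is exactly enough. Once that compatibility is pinned down, the remainder of the proof is the bookkeeping sketched above, essentially a refinement of the degree argument in the proof of Lemma~\ref{lem:purepower}.
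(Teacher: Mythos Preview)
Your argument has a genuine gap at the step you yourself flagged: the claim that each standard monomial $m_\alpha$ appearing in the normal form of $\bar f$ lies in $M$ as an element of $\sfk[Y]$ is false for a general monomial ideal $M$. Straightening can move a monomial out of $M$. For instance, in a $2\times 2$ block with $M=(X_{11}X_{22})$ and the diagonal term order, the unique standard monomial congruent to $X_{11}X_{22}$ is $X_{12}X_{21}$, which is not in $M$. Your proposed fix via row- and column-content does not help: the relations do preserve this content, but membership in an arbitrary monomial ideal is not determined by row/column content---the two monomials just mentioned have identical content. What the normal-form argument actually yields is only $\bar m_\alpha\in\bar M$; to finish you would still have to produce, for each $\alpha$, a monomial $u_\alpha\in M$ with $\bar u_\alpha=\bar m_\alpha$ and then re-invoke the grading to see that $u_\alpha\in N$. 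That extra step is the real content, and it is absent from your outline.

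The paper avoids normal forms altogether. It chooses the representative $f\in M$ at the outset, so every term of $f$ already lies in $M$. It then writes $f=f_J+f_I$ with $f_J$ a $\sfk$-linear combination of monomials of $J$-degree $\geq e$ and $f_I\in I_2(Y)$, and uses only the homogeneity of the generating binomials for the grading of Lemma~\ref{lem:purepower} to arrange that every monomial of $f_I$ has degree $<e$ (any binomial contribution with a term of degree $\geq e$ has both terms of degree $\geq e$ and can be absorbed into $f_J$). After this there is no cancellation between $f_J$ and $f_I$, so every term of $f_J$ survives in $f\in M$ and hence lies in $J\cap M$, giving $\bar f=\bar f_J\in\overline{J\cap M}$. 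This requires no Gr\"obner-basis or ASL machinery beyond the single grading already introduced.
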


\begin{proof}
Let $f\in \sfk[Y]$ be such that $\bar f\in\bar J\cap\bar M$. By collecting terms from $I_2(Y)$, we may assume that
  $f=f_J + f_I = f_M$, where $f_J \in J$, $f_I \in I_2(Y)$ and $f_M \in M$.
  Consider a term $t_J$ that appears in $f_J$. Suppose that
  there is cancellation between $t_J$ and a term
  $rm(X_{i_1,j_1}X_{i_2,j_2}-X_{i_1,j_2}X_{i_2,j_1})$ in $f_I$,
  where $r \in \sfk$, $m$ is a monomial, $i_1<i_2$ and $j_1<j_2$.
  Then considering the grading in Lemma~\ref{lem:purepower}, since
  $X_{i_1,j_1}X_{i_2,j_2}-X_{i_1,j_2}X_{i_2,j_1}$ is homogeneous
  of degree 0, 1 or 2, we have $rm(X_{i_1,j_1}X_{i_2,j_2}
  -X_{i_1,j_2}X_{i_2,j_1}) \in J$. So by rearranging terms, we may
  assume that no monomial in $f_I$ belongs to $J$. Now if a term
  $t_J$ appears in $f_J$, then it does not cancel with any term in
  $f_I$, so $t_J$ appears in $f_M$ since
  $f_J + f_I = f_M$. Therefore $t_J$ is a common multiple of one
  monomial generator in $J$ and one in $M$. Finally, we recall that the
  intersection of two monomial ideals is generated by the least
  common multiples of their respective monomial generators.
\end{proof}
The proofs below will involve  new ladders obtained from a given ladder $Y$.  

\begin{notn} \label{notn:ladder}
  When we are considering a ladder $Y$ and would like to discuss a new related
  ladder $\ti{Y}$, we denote the corners of $\ti Y$ as $(\ti{a}_i,\ti{b}_i)$ and $(\ti{c}_j,\ti{d}_j)$.
 The notation $\ti{R}$ will  denote the associated ladder determinantal ring $R_2(\ti{Y})$, with prime
  ideals such as $\ti{\p}_1$, $\ti{\p}_2$, $\ti{\q}_1$, $\ti{\q}_2$, etc.
  Similar protocols apply for ladders $\check Y$.
\end{notn}

\begin{thm}  \label{thm:onesided}
Let $R = R_2(Y)$ be a one-sided 2-connected ladder determinantal ring.  Then $|\mathfrak S_0(R)| \leq 2$.
\end{thm}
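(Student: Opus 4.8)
The strategy is to show that every semidualizing ideal $C$ of $R = R_2(Y)$ equals $[R]$ or $[\omega_R]$ by working inside the divisor class group $\Cl(R) \cong \bbz^{k+1}$ (recall $h = 0$ for a one-sided ladder, so the basis is $[\q_1], [\p_1], \dots, [\p_k]$). Since $\s_0(R) \subseteq \Cl(R)$ and every semidualizing module is a height-1 reflexive ideal, I would write $[C] = e_0[\q_1] + \sum_{j=1}^k e_j[\p_j]$ with $e_0, e_j \in \bbz$, and aim to prove that the only possibilities consistent with the semidualizing conditions $\Hom CC \cong R$ and $\Ext iCC = 0$ are $[C] = 0$ (giving $C \cong R$) and $[C] = [\omega_R]$.

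\textbf{Key steps.} First I would choose an explicit monomial representative: by Lemma~\ref{lem:purepower} the symbolic and ordinary powers of the generating primes agree, so a natural candidate representative for $[C]$ is the product $\q_1^{e_0}\prod_j \p_j^{e_j}$ when all exponents are nonnegative, and more generally a suitable intersection/colon of such powers (this is where we must be careful about signs of the $e_j$). Second, the semidualizing condition $\Hom CC \cong R$ forces $2[C] = [C] + [C]$ and more importantly, combined with Fact~\ref{fact:multisiso}, tells us that $C \otimes_R C$ is semidualizing and the multiplication map $C \otimes_R C \to C^2$ is an isomorphism; iterating, every tensor power $C^{\otimes n}$ is semidualizing, hence reflexive of rank 1, and so $nC$ stays inside the (finite, by \cite{nasseh:gart}) set $\s_0(R)$ for all $n$ — forcing $[C]$ to be a torsion element of $\Cl(R)$, hence $[C] = 0$, unless... — here is the subtlety — the dualizing class can also intervene via Fact~\ref{fact:tensorgivesomega}. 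So the real argument must combine: (a) $C$ semidualizing $\Rightarrow$ $\Hom C{\omega_R}$ semidualizing with $C \otimes \Hom C{\omega_R} \cong \omega_R$; (b) explicit computation in the free abelian group $\Cl(R)$ using the formula for $[\omega_R]$ from Fact~\ref{omega}(2) with $h = 0$, namely $[\omega_R] = \lambda_1[\q_1] + \sum_j \delta_j[\p_j]$. The core must be a vanishing computation: for a nontrivial, non-$\omega_R$ candidate $[C]$, exhibit a nonzero $\Ext$ module. Using Lemma~\ref{lem:intersection}, the relevant $\Ext$ or $\Hom$ groups can be computed monomially — one shows $\Hom CC$ is strictly larger than $R$ (it picks up fractions corresponding to the "gaps" in the ladder) unless the exponent vector is $0$ or equals $([\omega_R]$'s coordinates$)$. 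I expect the inductive structure (peeling off one inside corner of $Y$ to get a smaller ladder $\ti Y$, in the notation of Notation~\ref{notn:ladder}, and localizing — using the Remark that semidualizing is preserved by localization) to be the mechanism that reduces the general one-sided case to a base case with $k = 0$ (an honest generic determinantal ring $R_2$ of a rectangular block, handled in \cite{SSW}) or $k = 1$.

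\textbf{Main obstacle.} The hard part will be the monomial bookkeeping in Step~(b): showing that for every exponent vector $(e_0, e_1, \dots, e_k)$ outside $\{0, (\lambda_1, \delta_1, \dots, \delta_k)\}$, the ideal $C$ it represents has $\Hom CC \not\cong R$ (or some higher $\Ext$ nonzero). The difficulty is that the $\p_j$'s are "nested" ideals built from overlapping blocks of variables, so their powers and colons interact intricately; one needs a clean combinatorial criterion — probably phrased as: $\Hom CC \cong R$ forces each $e_j$ to be "compatible" with the shape of the $j$-th inside corner, and compatibility at all corners simultaneously leaves only the two trivial solutions. Managing the case where some $e_j$ are negative (so $C$ is genuinely a colon ideal, not a product) without drowning in notation is the part that will require the most care; I would handle it by replacing $C$ with $\Hom C{\omega_R}$ when beneficial, since Fact~\ref{fact:tensorgivesomega} lets us trade $[C]$ for $[\omega_R] - [C]$ and thereby normalize signs. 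Once the combinatorial criterion is isolated and proved for one corner (likely via the explicit $\Hom$-as-fractions computation enabled by Lemma~\ref{lem:intersection}), the general statement should follow by the induction on the number of inside corners together with Lemma~\ref{lem:connectivity} to keep the reduced ladders path-connected.
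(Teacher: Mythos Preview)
Your proposal has the right scaffolding (induction on the number of inside corners, localization to pass to a smaller ladder, base case $k=0$ from \cite{SSW}) but misses the mechanism that makes the induction actually close, and it contains an incorrect digression.

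First, the digression: it is \emph{not} true that $C$ semidualizing implies $C\otimes_R C$ is semidualizing, so the chain ``$C^{\otimes n}\in\s_0(R)$ for all $n$, hence $[C]$ is torsion in $\Cl(R)$'' does not run. Fact~\ref{fact:multisiso} has the semidualizing hypothesis on $\fa\otimes\fb$ as an \emph{assumption}, not a conclusion. You seem to sense this (``here is the subtlety''), but you should drop that paragraph entirely; it contributes nothing.

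Second, and more importantly, your ``main obstacle'' is not the obstacle the paper faces, because you are planning to attack the wrong problem. You propose to take an arbitrary exponent vector $(e_0,e_1,\dots,e_k)$ and show directly that $\Hom CC\not\cong R$ (or some $\Ext^i\!\neq 0$) unless the vector is $0$ or $[\omega_R]$. That is an infinite family of cases with no visible uniform argument. The paper's point is that a \emph{single} localization does not merely give a smaller ladder to feed back into the induction---it gives a surjection $\Cl(R)\to\Cl(\ti R)$ with a \emph{known cyclic kernel}, and the induction hypothesis on $\ti R$ forces $[C]$ to lie in one of two explicit cosets. Doing this for \emph{two} different corner localizations (one at the top, one at the bottom of the ladder) intersects the constraints and leaves at most two nontrivial candidate classes $[M_1],[M_2]$ with completely determined coefficients, existing only when all inside corners lie on a common antidiagonal. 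Only then does one do any module-theoretic work, and it is not a $\Hom CC$ computation: one uses $[M_1]+[M_2]=[\omega_R]$ together with Fact~\ref{fact:tensorgivesomega} and Fact~\ref{fact:multisiso} to get a contradiction by exhibiting either a non-injective multiplication map $M_1\otimes_R M_2\to M_1M_2$, a nonzero $\Tor_1(M_1,M_2)$, or a minimal-generator count mismatch between $M_1\otimes_R M_2$ and $\omega_R$. Your plan never isolates these two candidates, so the endgame you describe (a combinatorial ``compatibility at each corner'' criterion for $\Hom CC\cong R$) is both harder and unnecessary.
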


\begin{proof}
  Since $Y$ is one-sided, we assume without loss of generality that $h=0$. 
  The proof is by induction on the number $k$ of (upper) inside corners of $Y$.
  The case $k=0$ is given by \cite[Theorem 4.2]{SSW}, therefore, let $k>0$. As per Fact \ref{omega}, we have $[\omega_R] = \sum_{j=0}^k
  \gd_j [\p_j]$, where $\p_0=\q_1$ and $\gd_j = a_1+b_1-c_j-d_j$ for all
  $0 \leq j \leq k$. 
  
  Consider the ladder $\ti{Y}$ obtained by deleting rows $c_0,c_0+1,\dots,c_1-1$
  and columns $d_1+1,d_1+2,\dots,d_0$ of $Y$. Then $[\omega_{\ti{R}}]=
  \sum_{j=0}^{k-1} \gd_{j+1} [\ti{\p}_j]$. Let us invert $x_{c_1d_0}$ in $R$ and
  let $\ti\vf$ be the composition of the following natural surjections:
  \[
    \Cl(R) \to \Cl(R_{x_{c_1d_0}}) \xrightarrow{\cong} \Cl(\ti{R}).
  \]
  The maps here come from the flat maps
  $$R\to R_{x_{c_1d_0}}\xra\cong\ti R[X_{c_0d_0},\ldots,X_{c_1d_0},\ldots,X_{c_1,d_1+1}]_{X_{c_1d_0}}\from \ti R.$$
 In particular, the  maps on divisor class groups respect semidualizing modules by~\cite[Lemma 3.10(a)]{SSW}.
  
  We have $\ti\vf([\p_0]) = 0$ and $\ti\vf([\p_i]) = [\ti{\p}_{i-1}]$ for
  $1 \leq j \leq k$, hence $\ker \ti\vf = \bbz [\p_0]$. By our induction hypothesis, the only
  semidualizing modules of $\ti{R}$ are $[\ti{R}]$ and $[\omega_{\ti{R}}]$.
  Since the localization of a semidualizing module is also a semidualizing
  module, the only possible semidualizing modules of $R$ are in
  $\ti\vf^{-1}([\ti{R}]) = \bbz[\p_0]$ or $\ti\vf^{-1}([\omega_{\ti{R}}]) = \bbz[\p_0]
  + \sum_{j=1}^k \gd_j [\p_j]$. Let us write the possible semidualizing modules
  of $R$ as $[N_1]=r[\p_0]$ and $[N_2]=s[\p_0]+ \sum_{j=1}^k \gd_j [\p_j]$,
  where $r,s \in \bbz$.
  
  Next, we invert $x_{c_{k+1}d_k}$ and obtain $\check{Y}$ by deleting rows
  $c_k+1,c_k+2,\dots,c_{k+1}$ and columns $d_{k+1},d_{k+1}+1,\dots,d_k-1$
  of $Y$. Then $\Cl(\check{R})$ is generated by the basis elements
  $[\check{\p}_0],[\check{\p}_1],\dots,[\check{\p}_{k-1}]$, and $[\omega_{\check{R}}]
  = \sum_{j=0}^{k-1} (c_k+d_k-c_j-d_j)[\check{\p}_j]$.
  
  Under the natural map
  $\check\vf \colon \Cl(R) \to \Cl(\check{R})$, we have $\check\vf([\p_j])=[\check{\p}_j]$ for
  all $0 \leq j \leq k-1$ and $\check\vf([\p_k])=[\check{\q}'_1]=-\sum_{j=0}^{k-1}
  [\check{\p}_j]$ by \cite[Proposition 2.1]{Co}. By our induction hypothesis, the only classes of
  semidualizing modules of $\check{R}$ are $[\check{R}]$ and $[\omega_{\check{R}}]$.
  By assumption, the classes of semidualizing modules of $R$ are of the form $[N_1]$ and $[N_2]$,
  so we must have $\check\vf([N_i])=0$ or $[\omega_{\check{R}}]$ for $i=1,2$.
  
  If $0=\check\vf([N_1])=r[\check{\p}_0]$, then $[N_1]=0$. Similarly, if $[\omega_{\check{R}}]=\check\vf([N_2])=
  (s-\gd_k)[\check{\p}_0]+\sum_{j=1}^{k-1}(\gd_j-\gd_k)[\check{\p}_j]$, then $[N_2]=[\omega_R]$.
  
  If $[\omega_{\check{R}}]=\check\vf([N_1])=r[\check{\p}_0]$, then $r=c_k+d_k-c_0-d_0
  =\gd_0-\gd_k$, and $c_1+d_1=c_2+d_2=\dots=c_k+d_k$; i.e., all inside corners lie on the same
  \lq\lq antidiagonal\rq\rq, by which we simply mean the same line (and do not require that the matrix be square).  In this case, $[N_1]$
  gives us the possible nontrivial semidualizing module $[M_1]=(\gd_0-\gd_k)
  [\p_0]=(\gd_0-\gd_1)[\p_0]$.
 Hence $M_2=\Hom{M_1}{\omega_R}$ is also semidualizing for $R$ with $[M_2]=\delta_1\sum_{j=0}^k[\p_j]$. 
  
  On the other hand, if $\check\vf([N_2])=(s-\gd_k)[\check{\p}_0]+\sum_{j=1}^{k-1}(\gd_j-\gd_k)[\check{\p}_j]$, 
  then $s=\gd_k$ and $\gd_j=\gd_k$ for all $1 \leq j \leq k-1$.
  In this case, $[N_2]$ gives us the possible nontrivial semidualizing module
  $[M_2]=\gd_1\sum_{j=0}^k[\p_j]$, where all inside corners lie on the same
  antidiagonal, so $[M_2]=[\omega_R]-[M_1]$.
 Hence $M_1=\Hom{M_2}{\omega_R}$ is also semidualizing for $R$ with $[M_1]=(\delta_0-\delta_1)[\p_0]$. 
  
  Let us write $\zeta=\gd_0-\gd_1$ and $\gd=\gd_1$, so that $[M_1]=\zeta[\p_0]$
  and $[M_2]=\gd\sum_{j=0}^k[\p_j]$. Since $[M_1]+[M_2]=[\omega_R]$, 
  it suffices to show that we get a contradiction if $\zeta\gd \neq 0$.

  \begin{case} \label{case:notiso}
  $\zeta,\gd>0$. Using Lemmas~\ref{lem:purepower} and~\ref{lem:intersection}, one can check that
  \begin{align*}
    [M_1] &= [\p_0^{\zeta}] = [(x_{a_0d_{k+1}}, x_{a_0d_{k+1}+1},\dots,
    x_{a_0d_0})^{\zeta}] \text{ and}\\
    [M_2] &= 
    [\cap_{j=0}^k \p_j^{\gd}]
    = [(x_{a_0d_{k+1}}, x_{a_0d_{k+1}+1},\dots, x_{a_0d_k})^{\gd}].
  \end{align*}
  Let us identify $M_1$ with the ideal $(x_{a_0d_{k+1}}, x_{a_0d_{k+1}+1},
  \dots, x_{a_0d_0})^{\zeta}$ above and $M_2$ with $(x_{a_0d_{k+1}},
  x_{a_0d_{k+1}+1},\dots,x_{a_0d_k})^{\gd}$. Under the multiplication map
  $\mu \colon M_1 \otimes_R M_2 \to M_1M_2$, we have
  \[
    \mu(x_{a_0d_{k+1}}^{\zeta} \otimes x_{a_0d_{k+1}}^{\gd-1} x_{a_0d_{k+1}+1})
    = \mu(x_{a_0d_{k+1}}^{\zeta-1}x_{a_0d_{k+1}+1} \otimes
    x_{a_0d_{k+1}}^{\gd}).
  \]
  So $\mu$ is not injective. If $M_1,M_2$ are semidualizing modules of $R$,
  then we get a contradiction by Fact~\ref{fact:multisiso}.
  \end{case}
  
  \begin{case}
  $\zeta,\gd<0$. By \cite[Corollary 2.3(i)]{Co}, we have
  $\sum_{i=0}^{k+1} [\p_i] = 0$, where $[\p_{k+1}]=[\q'_1]$, giving
  \begin{align*}
    [M_1] &= \zeta[\p_0] = -\zeta\sum_{i=1}^{k+1}[\p_i] \quad \text{and}\\
    [M_2] &= \gd \sum_{i=0}^k [\p_i] = -\gd[\p_{k+1}].
  \end{align*}
  We may then use Case~\ref{case:notiso} by symmetry.
  \end{case}
  
  \begin{case} \label{case:mingen}
  $\zeta>0$, $\gd<0$. Again \cite[Corollary 2.3(i)]{Co} and Lemma~\ref{lem:purepower}  give
  \begin{align*}
    [M_1] &= [\p_0^{\zeta}]
      = [(x_{a_0b_1}, x_{a_0b_1+1},\dots, x_{a_0b_0})^{\zeta}],\\
    [M_2] &= \left|\gd\right| [\p_{k+1}] = [\p_{k+1}^{|\gd|}]
      = [(x_{a_0b_1},x_{a_0+1b_1},\dots, x_{a_1b_1})^{|\gd|}], \text{ and}\\
    [\omega_R] &= [M_1] + [M_2] = [\p_0^{\zeta} \cap \p_{k+1}^{|\gd|}].
  \end{align*}
  Let us identify $M_1,M_2,\omega_R$ with the ideals on the right, as
  in Case~\ref{case:notiso}.
  
Now we use the fact that if $M_2$ is semidualizing, then so is
$\Hom{M_2}{\omega_R}$, and we have  isomorphisms
$$M_1 \otimes_R M_2\cong \Hom{M_2}{\omega_R} \otimes_R M_2\xra\cong \omega_R,$$
where the second map is given by evaluation.
In particular, it follows that the modules $M_1 \otimes_R M_2$ and $\omega_R$
have minimal generating sets of the same size.

Let $\mingen(\p_0^{\zeta})$ denote the set of all monomials
$m_1=x_{a_0b_1}^{p_0} x_{a_0b_1+1}^{p_1}\cdots x_{a_0b_0}^{p_{b_0-b_1}}$ in $R$ of degree $\zeta$.
This is a minimal generating set for $\p_0^{\zeta}$.
Similarly, the set $\mingen(\p_{k+1}^{|\gd|})$ of all monomials $m_2=x_{a_0b_1}^{q_{0}}x_{a_0+1b_1}^{q_{1}}\cdots x_{a_1b_1}^{q_{a_1-a_0}}$
in $R$ of degree $|\gd|$ is a minimal generating set for $\p_{k+1}^{|\gd|}$.
By abuse of notation, we write 
\newcommand{\lcm}{\operatorname{lcm}}%
$\lcm(m_1,m_2)$
for the monomial
$$
\lcm(m_1,m_2)=x_{a_0b_1}^{\max(p_0,q_0)} x_{a_0b_1+1}^{p_1}\cdots x_{a_0b_0}^{p_{b_0-b_1}}x_{a_0+1b_1}^{q_{1}}\cdots x_{a_1b_1}^{q_{a_1-a_0}}.
$$
Then Lemma~\ref{lem:intersection} shows that the function 
$$\lcm\colon\mingen(\p_0^{\zeta})\times\mingen(\p_{k+1}^{|\gd|})\to\p_0^{\zeta} \cap \p_{k+1}^{|\gd|}$$
has its image equal to a generating set for $\p_0^{\zeta} \cap \p_{k+1}^{|\gd|}$.
The previous paragraph shows that each minimal generating set for $\p_0^{\zeta} \cap \p_{k+1}^{|\gd|}$ must have the same size
as $\mingen(\p_0^{\zeta})\times\mingen(\p_{k+1}^{|\gd|})$, so the lcm map here must be injective with  image equal
to a minimal generating set for $\p_0^{\zeta} \cap \p_{k+1}^{|\gd|}$. 
However, this is not the case because the following computation exhibits an lcm in $\m(\p_0^{\zeta} \cap \p_{k+1}^{|\gd|})$
where $\m$ is the homogeneous maximal ideal of $R$:
\begin{align*}
\lcm(x_{a_0b_1}^{\zeta-1} x_{a_0b_1+1},x_{a_0b_1}^{|\gd|-1}x_{a_0+1b_1})
&=x_{a_0b_1}^{\max(\zeta,|\gd|)-1} x_{a_0b_1+1}x_{a_0+1b_1}\\
&=x_{a_0b_1}^{\max(\zeta,|\gd|)} x_{a_0+1,b_1+1}\\
&=\lcm(x_{a_0b_1}^{\zeta},x_{a_0b_1}^{|\gd|})x_{a_0+1,b_1+1}.
\end{align*}
Hence this lcm cannot be part of a minimal generating set for $\p_0^{\zeta} \cap \p_{k+1}^{|\gd|}$.

  \end{case}
  
  \begin{case}
  $\zeta<0$, $\gd>0$. Then $[M_1] = \zeta[\p_0] = \left| \zeta \right|
  \sum_{j=1}^{k+1} [\p_j]$. Since $[M_2] = \gd\sum_{j=0}^k [\p_j]$, we may
  assume that $\gd \geq |\zeta|$ by symmetry, so that $\gd_1 > \gd_0 \geq 0$.
  Then using Lemmas~\ref{lem:purepower} and~\ref{lem:intersection} we have
  \begin{align*}
    [M_1] &= \big[\cap_{j=1}^{k+1} \p_j^{|\zeta|} \,\big]
    = [(x_{c_0d_{k+1}},x_{c_0+1d_{k+1}},\dots,x_{c_1d_{k+1}})^{|\zeta|}],\\
    [M_2] &= \big[\cap_{j=0}^k \p_j^{\gd} \,\big]
    = [(x_{c_0d_{k+1}},x_{c_0d_{k+1}+1},\dots,x_{c_0d_k})^{\gd}],
    \text{ and}\\
    [\omega_R] &= \gd_0[\p_0] + \gd_1\sum_{j=1}^k [\p_j]
    = \left[\p_0^{\gd_0} \cap \bigcap_{j=1}^k \p_j^{\gd_1}\right]
    = \left[\left(\prod_{u=1}^{\gd_0} x_{c_0j_u} \prod_{v=1}^{\gd_1-\gd_0}
       x_{i_v,j_{\gd_0+v}} \right)\right],
  \end{align*}
  where $c_0 \leq i_1,i_2,\dots,i_{\gd_1-\gd_0} \leq c_1$ and
  $d_{k+1} \leq j_1,j_2,\dots,j_{\gd_1} \leq d_k$.
  We again identify $M_1,M_2,\omega_R$ with the ideals shown above.
  Here, the
  multiplication map
  \begin{align*}
    &(x_{c_0d_{k+1}},x_{c_0+1d_{k+1}},\dots,x_{c_1d_{k+1}})^{\gd_1-\gd_0}
    \otimes_R (x_{c_0d_{k+1}},x_{c_0d_{k+1}+1},\dots,x_{c_0d_k})^{\gd_1}\\
    & \to  x_{c_0d_{k+1}}^{\gd_1-\gd_0} \omega_R \cong \omega_R
  \end{align*}
  may actually give an isomorphism.
  So to get a contradiction, we will use the fact that if $M_2$ is
  semidualizing, then
  \[
    \Tor{1}{M_1}{M_2} \cong \Tor{1}{\Hom{M_2}{\omega_R}}{M_2} = 0.
  \]
  Consider a minimal free resolution of $M_2$ as follows.
  \begin{equation} \label{eqn:c2res}
    0\from M_2 \xleftarrow{\partial_0} R^{\gb_0} \xleftarrow{\partial_1} R^{\gb_1}
    \xleftarrow{\partial_2} R^{\gb_2} \leftarrow \cdots, \quad \text{where}
  \end{equation}
  \begin{align*}
    \partial_0 &= \left( \begin{matrix}
      x_{c_0d_{k+1}}^{\gd} & x_{c_0d_{k+1}}^{\gd-1}x_{c_0d_{k+1}+1}
      & \cdots
      & x_{c_0d_{k+1}}x_{c_0d_{k+1}+1}^{\gd-1} & x_{c_0d_{k+1}+1}^{\gd}
      & \cdots
    \end{matrix} \right),\\
    \partial_1 &= \left( \begin{array}{c c c c c c c c c}
      x_{c_0d_{k+1}+1} & x_{c_0+1d_{k+1}+1} & \cdots & x_{c_{k+1}d_{k+1}+1}
      & \cdots\\
      -x_{c_0d_{k+1}} & -x_{c_0+1d_{k+1}} & \cdots & -x_{c_{k+1}d_{k+1}}
      & \cdots\\
      0 & 0 & \cdots & 0 & \cdots\\
      0 & 0 & \cdots & 0\\
      \vdots & & & \vdots & \vdots\\
      0 & 0 & \cdots & 0
      & \cdots
    \end{array} \right) \text{, etc.}
  \end{align*}
  Now we truncate \eqref{eqn:c2res} and tensor with $M_1$ to get
  \[
    0 \from M_1^{\gb_0}
    \xleftarrow{\partial_1 \otimes M_1} M_1^{\gb_1}
    \xleftarrow{\partial_2 \otimes M_1} M_1^{\gb_2} \leftarrow \cdots
  \]
  We see that $\x = (x_{c_0d_{k+1}}^{|\zeta|-1}x_{c_0+1d_{k+1}},
  -x_{c_0d_{k+1}}^{|\zeta|},0,\dots,0)^T \in
  \ker(\partial_1 \otimes M_1)$. This is a minimal generator of $M_1^{\gb_1}$.
  However, since \eqref{eqn:c2res} is a
  minimal resolution, the entries of $\partial_2$ are in the homogeneous maximal
  ideal of $R$, so $\x \notin \im(\partial_2 \otimes M_1)$, giving us our
  final contradiction. \qedhere
  \end{case}
\end{proof}

\begin{cor} \label{generalonesided} Let $Y$ be a one-sided $t$-connected ladder.  Then $|\frak S_0(R_t(Y))| \leq 2$ for any $t \geq 1$.
\end{cor}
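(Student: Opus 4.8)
The plan is to reduce from the general $t$-connected one-sided case to the $t=2$ case handled by Theorem~\ref{thm:onesided} (together with the Cohen--Macaulay/normal facts recalled in Section~\ref{sec180114a}) by exploiting the structure of the divisor class group and the canonical class of a one-sided ladder determinantal ring. First I would recall from Conca~\cite{Co} that for a one-sided $t$-connected ladder $Y$ (say $h=0$ by symmetry), the divisor class group $\Cl(R_t(Y))$ is again a free abelian group whose rank depends on $k$ and $t$, with an explicit basis consisting of (classes of) height-one primes defined by vanishing of a suitable row or a suitable ``corner region'' of variables, entirely analogously to the $t=2$ generators $\p_j,\q_i$ on page~\pageref{defn:pq}. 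The key structural point is that, just as in the proof of Theorem~\ref{thm:onesided}, the canonical class $[\omega_{R_t(Y)}]$ is a specified integer combination of these basis elements, and one can localize at a single variable $x_{cd}$ sitting at an appropriate corner to pass to a strictly smaller one-sided $t$-connected ladder $\ti Y$, with the induced surjection $\Cl(R_t(Y))\to\Cl(R_t(\ti Y))$ respecting semidualizing modules by~\cite[Lemma~3.10(a)]{SSW}.

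Next I would run the same two-localization argument as in Theorem~\ref{thm:onesided}: localizing once to delete the ``top-right'' block and once to delete the ``bottom-left'' block, each time invoking the inductive hypothesis (on $k$, with base case $k=0$ given by~\cite[Theorem~4.2]{SSW}, the pure determinantal case) to conclude that the only semidualizing classes of the two smaller rings are trivial. Pulling back along the two class-group maps pins down the candidate semidualizing classes of $R_t(Y)$ to a bounded list, exactly as the $[N_1]=r[\p_0]$ and $[N_2]=s[\p_0]+\sum_j\gd_j[\p_j]$ in the $t=2$ proof, and one sees the only potential nontrivial candidates arise when all inside corners lie on a common antidiagonal; in that situation one gets a complementary pair $[M_1],[M_2]$ with $[M_1]+[M_2]=[\omega_R]$, and it remains to rule out $[M_1]\neq[R]$. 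The mechanism is identical to Cases~\ref{case:notiso}--\ref{case:mingen} of Theorem~\ref{thm:onesided}: represent $M_1,M_2$ as explicit powers (or intersections of powers) of the monomial primes via the analogues of Lemmas~\ref{lem:purepower} and~\ref{lem:intersection}, and then show either that the multiplication map $M_1\otimes_RM_2\to M_1M_2$ fails to be injective (contradicting Fact~\ref{fact:multisiso}), or that $\Tor 1{M_1}{M_2}\neq 0$ (contradicting Fact~\ref{fact:tensorgivesomega}), by exhibiting a specific element in the kernel coming from an lcm relation among the monomial generators.

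An alternative, possibly cleaner, route is to bypass re-deriving everything for general $t$ and instead localize $R_t(Y)$ directly at the residue of a corner variable so as to split off polynomial variables and land inside a $2$-connected ladder determinantal ring of $2$-minors; if one can arrange a chain of such localizations realizing the generators of $\Cl(R_t(Y))$ inside class groups of rings to which Theorem~\ref{thm:onesided} applies, then~\cite[Lemma~3.10(a)]{SSW} plus the fact (recalled in Section~\ref{sec180114a}) that semidualizing is preserved and detected under faithfully flat localization forces $|\s_0(R_t(Y))|\le 2$. Which of the two routes is shorter depends on how cleanly Conca's description of $\Cl(R_t(Y))$ and $[\omega_{R_t(Y)}]$ for general $t$ parallels the $t=2$ case; I expect they do parallel it closely, since~\cite{Co} treats arbitrary $t$.

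The main obstacle I anticipate is the monomial-ideal bookkeeping in the general-$t$ analogues of Lemmas~\ref{lem:purepower} and~\ref{lem:intersection}: for $t\geq 3$ the defining binomials of $I_t(Y)$ are determinantal relations of size $t$, not just $2\times 2$, so the ``grading by membership in $J$'' trick that makes the symbolic-power identity $J^{(e)}=J^e$ and the intersection formula $\bar J\cap\bar M=\overline{J\cap M}$ work needs to be checked to still behave well — in particular that the larger minors are still homogeneous of controlled degree with respect to each such grading, so that cancellation arguments go through. If that combinatorial input holds (and I expect it does, since the primes in question are again defined by the vanishing of a set of variables forming a ``column'' or a ``staircase region''), the rest of the argument is a faithful transcription of the proof of Theorem~\ref{thm:onesided}, with the case analysis on the signs of $\zeta$ and $\gd$ unchanged.
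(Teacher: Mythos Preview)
Your proposal is much more complicated than necessary, and the route you lay out as primary contains a genuine obstacle you have not resolved. The paper does \emph{not} re-run the $t=2$ argument (induction on $k$, sign cases on $\zeta,\gd$, monomial bookkeeping) for general $t$. Instead, it inducts on $t$: for $t\geq 3$ it deletes the first row and first column of $Y$ to obtain a $(t-1)$-connected one-sided ladder $Z$, and invokes Conca~\cite[Proposition~4.1(2) and proof of Theorem~4.9(b)]{Co} to get an \emph{isomorphism} $\Cl(R_t(Y))\xrightarrow{\cong}\Cl(R_{t-1}(Z))$ arising from localization. Since this map is an isomorphism of class groups (not merely a surjection) and sends semidualizing classes to semidualizing classes by~\cite[Lemma~3.10(a)]{SSW}, the inequality $|\s_0(R_t(Y))|\leq|\s_0(R_{t-1}(Z))|\leq 2$ follows immediately from the inductive hypothesis, with base case $t=2$ being Theorem~\ref{thm:onesided}. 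No analogue of Lemmas~\ref{lem:purepower} or~\ref{lem:intersection}, no case analysis, no antidiagonal condition is needed.

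Your ``alternative route'' gestures toward the right idea (reduce $t$ by localization), but you did not identify the specific mechanism: the Conca isomorphism via row/column deletion, stepping down one $t$ at a time. Your primary route, by contrast, runs into exactly the difficulty you flagged: for $t\geq 3$ the height-one primes generating $\Cl(R_t(Y))$ are no longer monomial ideals in the variables---they involve smaller minors---so the grading trick behind Lemma~\ref{lem:purepower} and the lcm description in Lemma~\ref{lem:intersection} do not carry over in any obvious way, and the explicit kernel/Tor computations in Cases~\ref{case:notiso}--\ref{case:mingen} would have to be rebuilt from scratch. That is a substantial project, and the paper's induction on $t$ bypasses it entirely.
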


\begin{proof} We induct on $t$. If $t=1$ or if $Y$ contains no $t \times t$ minors, then $R_t(Y)$ is Gorenstein so the result is trivial.  Since the case of $t=2$ is handled above, suppose that $Y$ contains $t \times t$ minors for $t \geq 3$, and assume that for all one-sided $(t-1)$-connected ladders, the associated ladder determinantal rings of $(t-1) \times (t-1)$ minors have only trivial semidualizing modules.  Let $Z$ be the ladder obtained from $Y$ by deleting the first row and first column, which is necessarily $(t-1)$-connected.  By \cite[Proposition 4.1(2) and proof of Theorem 4.9(b)]{Co}, there is an isomorphism $\Cl(R_t(Y)) \to \Cl(R_{t-1}(Z))$.  As in the proof of Theorem~\ref{thm:onesided}, the class of any semidualizing module for $R_t(Y)$ must map to the class of a semidualizing module for $R_{t-1}(Z)$, and the result follows.
\end{proof}

Next, we address the case of one-sided ladders that are not necessarily $t$-connected.
Recall that one-sided ladders are, by definition, path-connected.

\begin{thm}[One-Sided Ladder Theorem] \label{thm:onesidedpathconnected} 
Let $Y$ be a one-sided ladder. The ring $R_t(Y)$ has only trivial semidualizing modules, i.e., $|\s_0(R_t(Y))| \leq 2$
\end{thm}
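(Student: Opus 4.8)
The plan is to reduce the case of an arbitrary one-sided ladder $Y$ to the $t$-connected case already handled in Corollary~\ref{generalonesided}, by decomposing $Y$ along its failure of $t$-connectivity. Since $Y$ is one-sided it is path-connected by definition, but it may still be $t$-disconnected; the idea is to induct on the number of $t$-disconnections (or, say, on $|Y|$) and show that a $t$-disconnection of a path-connected one-sided ladder forces a rigid structural picture, namely that $Y$ decomposes as a union $Z_1 \cup Z_2$ where the overlap is a single block submatrix and $R_t(Y)$ is built from $R_t(Z_1)$ and $R_t(Z_2)$ in a way that is transparent on the level of divisor class groups and semidualizing modules.

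First I would analyze what a $t$-disconnection $Y = Z_1 \cup Z_2$ of a path-connected one-sided ladder looks like. Because $Z_1 \cap Z_2 = \varnothing$ as sets of variables but $Y$ is path-connected, the two pieces must be "glued" along adjacent rows/columns; in the one-sided case (say $h=0$) this glued region is a block submatrix $B$ of $Y$ of width or height at most $t-1$, and every $t$-minor of $Y$ lives strictly inside $Z_1\cup B$ or inside $B\cup Z_2$. The key algebraic point, which I expect to extract from \cite{Co} or \cite{CoGor} (or to prove directly using the monomial/grading techniques of Lemmas~\ref{lem:purepower} and~\ref{lem:intersection}), is that along such a block submatrix of size $< t$ the defining ideal imposes no relations, so $R_t(Y)$ is a polynomial-variable-wise "fibered" combination of $R_t(Z_1\cup B)$ and $R_t(B\cup Z_2)$ — concretely, after inverting a suitable corner variable $x$ in the overlap block $B$, the localization $R_t(Y)_x$ becomes a localization of a polynomial extension of the smaller ring $R_t(Y')$, where $Y'$ is obtained by deleting the rows and columns of the overlap on one side. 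This is exactly the mechanism used in the proof of Theorem~\ref{thm:onesided} via the maps $\ti\vf$ and $\check\vf$, so I would mimic it: produce natural surjections $\Cl(R_t(Y)) \to \Cl(R_t(Y'))$ that respect semidualizing modules by \cite[Lemma~3.10(a)]{SSW}, with the smaller ladders $Y'$ each having strictly fewer $t$-disconnections.

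The inductive step then runs as follows: given a one-sided ladder $Y$ that is $t$-disconnected, produce two (or more) one-sided ladders with fewer $t$-disconnections, each of which (by induction, and ultimately by Corollary~\ref{generalonesided} at the base case) has only trivial semidualizing modules; pull the classes $[R]$ and $[\omega]$ back along the class-group surjections and intersect the preimages, exactly as the $N_1,N_2,M_1,M_2$ bookkeeping in the proof of Theorem~\ref{thm:onesided} does, to conclude that the only candidates for semidualizing classes of $R_t(Y)$ are $[R_t(Y)]$ and $[\omega_{R_t(Y)}]$. One must also check that one-sidedness is preserved under the deletions — deleting rows/columns from a path-connected one-sided ladder can in principle create new inside corners, so I would need to verify that the specific deletions (removing the overlap strip entirely on one side of the $t$-disconnection) keep $h=0$; this is where some care is required, and it is the main obstacle: controlling the combinatorics of the $t$-disconnection so that the pieces remain one-sided and the class-group maps are genuinely surjective with the right kernels. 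Once that structural lemma is in place, the semidualizing-module bookkeeping is identical in spirit to the arguments already given, and the theorem follows by induction on the number of $t$-disconnections (equivalently, the number of path-components one would obtain by "cutting" at each disconnection).
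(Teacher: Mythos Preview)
Your inductive plan via $t$-disconnections and class-group localization is much heavier than what the situation requires, and the paper's proof is correspondingly shorter and more direct. The key structural fact you are circling around but never state is this: for a one-sided path-connected ladder with $h=0$, the width of each row and the height of each column are monotone, so all variables lying in a row of width $<t$ or a column of height $<t$ are concentrated at the two ends of the staircase and participate in \emph{no} $t$-minor at all. Concretely, setting $j_1=\max\{j\mid c_j<t\}$ and $j_2=\min\{j\mid d_j<t\}$, the subladder $Y'=\{X_{ij}\in Y\mid i\le c_{j_2},\ j\le d_{j_1}\}$ is one-sided and $t$-connected, and $Z=Y\smallsetminus Y'$ contains no $t$-minors whatsoever. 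Hence $R_t(Y)=R_t(Y')[Z]$ is literally a polynomial extension, and one invokes \cite[Corollary~3.11(a)]{SSW} (polynomial extensions do not change $\mathfrak S_0$) together with Corollary~\ref{generalonesided}. No localization, no $\Cl$-bookkeeping, no induction on the number of disconnections is needed.

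Your proposal also contains a confusion worth flagging: you speak of an ``overlap block $B$'' between $Z_1$ and $Z_2$, but by definition of a $t$-disconnection $Z_1\cap Z_2=\varnothing$, so there is no shared block; what you are really detecting is that one of the two pieces is a thin strip containing no $t$-minors. Once you see that, the elaborate preimage/intersection argument you sketch (in the style of the proof of Theorem~\ref{thm:onesided}) collapses: there is only one nontrivial piece to analyze, and it is already handled by Corollary~\ref{generalonesided}. Your route could in principle be pushed through, but the ``main obstacle'' you identify---controlling the combinatorics so that the pieces stay one-sided and the kernels of the $\Cl$-maps are as expected---disappears entirely once you use the polynomial-extension observation instead.
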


\begin{proof}
The field $R_1(Y)=\sfk$ has $\s_0(R_1(Y))=\s_0(\sfk)=\{[\sfk]\}$.
Thus, we may assume that $t>1$, and furthermore that $h=0$ and $k>0$. 
If $Y$ contains no $t$-minors, then $R_t(Y)$ is a polynomial ring over $\sfk$, which is Gorenstein, so $\s_0(R_t(Y))=\{[R]\}$ in this case.
Thus, we assume that $Y$ contains a $t$-minor.
Since $Y$ is path-connected, it is straightforward to show that $X_{11}\in Y$ and, moreover, that all the variables $X_{1j}$ and $X_{i1}$ are in $Y$. 

Let $j_1 = \max\{j \mid c_j<t\}$ and
$j_2=\min\{j \mid d_j<t\}$. If $j_1 \geq j_2$, then $Y$ contains no $t$-minors,
so we must have $j_1<j_2$. Let $Y' = \{ X_{ij} \in Y \mid i \leq c_{j_2}
\text{ and } j \leq d_{j_1}\}$ and $Z=Y \ssm Y'$. Then $Y'$ is a one-sided
$t$-connected ladder and $Z$ contains no $t$-minors.
It follows that $R_t(Y)=R_t(Y')[Z]$.  Then $|\frak S_0(R_t(Y))| = |\frak S_0(R_t(Y'))|$ by \cite[Corollary 3.11(a)]{SSW}.  Now apply Corollary \ref{generalonesided}.
\end{proof}

We end this section with an example that illustrates two aspects of Theorem~\ref{thm:onesidedpathconnected} and its proof.

\begin{ex}\label{ex180710a}
The following ladder $Y$ is 2-connected and path-connected.
\begin{equation*}
Y:\quad
\begin{smallmatrix}
\displaystyle X_{11} &\displaystyle X_{12}&\displaystyle X_{13}&\displaystyle X_{14}&\displaystyle X_{15}  \\
\displaystyle X_{21} &\displaystyle X_{22}&\displaystyle X_{23}&\displaystyle X_{24} &\displaystyle X_{25} \\
\displaystyle X_{31}&\displaystyle X_{32}&\displaystyle X_{33} &\displaystyle X_{34}
\end{smallmatrix}
\end{equation*}
However, it is 3-disconnected because the variables $X_{14},X_{24}$ are not used in any 3-minor. 
In the notation of the proof of Theorem~\ref{thm:onesidedpathconnected} with $t=3$, this yields $Z=\{X_{14},X_{24}\}$ and
$Y'$ is the next ladder which is 3-connected 
\begin{equation*}
Y':\quad
\begin{smallmatrix}
\displaystyle X_{11} &\displaystyle X_{12}&\displaystyle X_{13} &\displaystyle X_{14} \\
\displaystyle X_{21} &\displaystyle X_{22}&\displaystyle X_{23} &\displaystyle X_{24}\\
\displaystyle X_{31}&\displaystyle X_{32}&\displaystyle X_{33} &\displaystyle X_{34}
\end{smallmatrix}
\end{equation*}
and $R_3(Y)=R_3(Y')[Z]$, so
$|\s_0(R_3(Y))|=|\s_0(R_3(Y'))|=2$ by~\cite[Theorem 4.2]{SSW}.
Similarly, we have $|\s_0(R_4(O))|=1$ for the ladder $O$ from the introduction.
Also, the path-connected condition in our definition of ``one-sided'' is necessary for Theorem~\ref{thm:onesidedpathconnected}
as the next ladder has no corners but $|R_2(Y'')|=4$ by~\cite[Theorem~4.5]{SSW}.%
\begin{equation*}
Y'':\quad
\begin{smallmatrix}
&&&&\displaystyle X_{14} &\displaystyle X_{15}&\displaystyle X_{16}  \\
&&&&\displaystyle X_{24} &\displaystyle X_{25}&\displaystyle X_{26} \\
\displaystyle X_{31}&\displaystyle X_{32}&\displaystyle X_{33} \\
\displaystyle X_{41}&\displaystyle X_{42}&\displaystyle X_{43} 
\end{smallmatrix}
\end{equation*}
\end{ex}


\section{Size-2 Minors of Two-Sided Ladders with No Coincidental Corners}
\label{sec180114c}


In this section, we study ladders which are 2-connected.  (Minors of size $2 \times 2$ are special in the sense that $R_2(Y)$ is an algebra with straightening laws, or ASL, on the poset $Y$, as per \cite[p.~121]{Co}, but $R_{t>2}(Y)$ is not.  We will consider more general ladders in another project \cite{SWSeSp2}, including ladders with coincidental corners.)
In particular, throughout this section, $Y$ will be a 2-connected ladder without coincidental corners and $R_2(Y)$ the associated ladder determinantal ring.  

As in the previous section, we will use the notation $\ti{Y}$ for ladders obtained from the given ladder $Y$.  The notation $\ti{R}$ will always denote the associated ladder determinantal ring $R_2(\ti{Y})$. See Notation \ref{notn:ladder}.  In order to provide an upper bound on $|\frak S_0(R_2(Y))|$ we will need the additional notation defined below.  

\begin{notn}
  For any ladder, let $\eta_1 = \min\{j \mid b_j\leq d_k\}$, $\eta_2 =\max\{i \mid a_i \leq c_1\}$,
  $\kappa_1 = \min\{i \mid c_i \geq a_h\}$, and $\kappa_2=\max\{j \mid d_j \geq b_1\}$.  For example, in the following ladder, we have $h=7$, $k=8$,
  $\eta_1=3$, $\eta_2=5$, $\kappa_1=4$ and $\kappa_2=6$.

\begin{center}  
  \begin{picture}(204,204)
    \put(0,0){\line(0,1){88}}
    \put(0,88){\line(1,0){16}}
    \put(16,88){\line(0,1){16}}
    \put(16,104){\line(1,0){16}}
    \put(32,104){\line(0,1){20}}
    \put(32,124){\line(1,0){16}}
    \put(48,124){\line(0,1){16}}
    \put(48,140){\line(1,0){16}}
    \put(64,140){\line(0,1){16}}
    \put(64,156){\line(1,0){16}}
    \put(80,156){\line(0,1){16}}
    \put(80,172){\line(1,0){16}}
    \put(96,172){\line(0,1){16}}
    \put(96,188){\line(1,0){16}}
    \put(112,188){\line(0,1){16}}
    \put(112,204){\line(1,0){92}}
    \put(0,0){\line(1,0){88}}
    \put(88,0){\line(0,1){16}}
    \put(88,16){\line(1,0){16}}
    \put(104,16){\line(0,1){16}}
    \put(104,32){\line(1,0){16}}
    \put(120,32){\line(0,1){16}}
    \put(120,48){\line(1,0){16}}
    \put(136,48){\line(0,1){16}}
    \put(136,64){\line(1,0){16}}
    \put(152,64){\line(0,1){16}}
    \put(152,80){\line(1,0){20}}
    \put(172,80){\line(0,1){16}}
    \put(172,96){\line(1,0){8}}
    \put(180,96){\line(0,1){8}}
    \put(180,104){\line(1,0){8}}
    \put(188,104){\line(0,1){10}}
    \put(188,114){\line(1,0){16}}
    \put(204,114){\line(0,1){90}}
    \multiput(112,32)(0,8){20}{\line(0,1){4}}
    \multiput(88,16)(0,8){20}{\line(0,1){4}}
    \multiput(32,114)(8,0){20}{\line(1,0){4}}
    \multiput(16,88)(8,0){20}{\line(1,0){4}}
    \put(79,193){$(a_1,b_1)$}
    \put(47,161){$(a_3,b_3)$}
    \put(15,129){$(a_5,b_5)$}
    \put(-17,93){$(a_7,b_7)$}
    \put(190,103){$(c_1,d_1)$}
    \put(155,69){$(c_4,d_4)$}
    \put(123,37){$(c_6,d_6)$}
    \put(91,5){$(c_8,d_8)$}
  \end{picture}
\end{center}

As a second example, for the ladder \ref{eq180304a} on page~\pageref{eq180304a}, $\eta_1 = 1; \eta_2 = 0; \kappa_1 = 2$; and $\kappa_2 = 2$.

\end{notn}

\begin{rmk} Note that a ladder is one-sided if and only if $\eta_1 = 0$ or $\kappa_1 = 0$; i.e., if and only if $k=0$ or $h=0$, respectively.
\end{rmk}

\begin{prop} \label{prop:4corners}
  Let $R = R_2(Y)$ for a two-sided 2-connected ladder $Y$ with $h \geq 1$ lower inside corners and $k \geq 1$ upper inside corners, such that no two inside corners
  coincide.  Assume that for all 2-connected ladders $Z$ with fewer than $h+k$
  inside corners, where no two coincide, the associated ladder determinantal ring $R_2(Z)$ has only trivial
  semidualizing modules. Then $|\frak S_0(R)| \leq 4$.
\end{prop}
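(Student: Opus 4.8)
The plan is to mimic the inductive machinery of Theorem~\ref{thm:onesided}, but now the divisor class group $\Cl(R)$ has rank $h+k+1$, so after the localizations we will be left with a rank-$2$ space of candidate semidualizing classes rather than a rank-$1$ one, which is why the bound is $4$ instead of $2$. First I would set up two auxiliary ladders. Let $\ti Y$ be obtained by collapsing the lowest lower inside corner (deleting the rows/columns that separate $(a_h,b_h)$ from the previous corner and from $(a_{h+1},b_{h+1})=(m,1)$), so that $\ti Y$ is a $2$-connected ladder with $h-1$ lower and $k$ upper inside corners, still with no coincidental corners. As in Theorem~\ref{thm:onesided}, inverting the appropriate variable $x_{??}$ gives flat maps $R\to R_{x}\xra{\cong}\ti R[\text{variables}]_{x}\from \ti R$, hence a surjection $\ti\vf\colon \Cl(R)\to\Cl(\ti R)$ respecting semidualizing classes by~\cite[Lemma~3.10(a)]{SSW}, whose kernel is a rank-$1$ subgroup (spanned by the class $[\q_h]$ or $[\q_{h+1}]$, depending on which corner we collapse). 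By the hypothesis of the proposition, $\s_0(\ti R)=\{[\ti R],[\omega_{\ti R}]\}$, so $\ti\vf^{-1}$ of these two classes gives two cosets of $\ker\ti\vf$; that is, every semidualizing class of $R$ lies in one of two rank-$1$ affine subsets of $\Cl(R)$.

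Next I would run the symmetric construction: let $\check Y$ collapse the topmost upper inside corner $(c_1,d_1)$ toward $(c_0,d_0)=(1,n)$, again producing a $2$-connected ladder with $h$ lower and $k-1$ upper inside corners and no coincidental corners, and a surjection $\check\vf\colon\Cl(R)\to\Cl(\check R)$ with rank-$1$ kernel (spanned by $[\p_1]$, using $\check\vf([\p_1])=0$, $\check\vf([\p_j])=[\check\p_{j-1}]$ for $j\ge 2$, and the known image of the remaining basis classes via the relations in Fact~\ref{omega}(3) and~\cite[Proposition~2.1]{Co}). Combining: a candidate semidualizing class $[N]$ must lie in $\ti\vf^{-1}(\{0,[\omega_{\ti R}]\})\cap\check\vf^{-1}(\{0,[\omega_{\check R}]\})$. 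Because $\ker\ti\vf$ and $\ker\check\vf$ together span a rank-$2$ subgroup and the four "target" pairs give four translates, one checks that this intersection is contained in a union of at most four single classes: the trivial pair $\{[R],[\omega_R]\}$ together with at most two further candidates, say $[M_1]$ and $[M_2]=[\omega_R]-[M_1]=\Hom{M_1}{\omega_R}$ by Fact~\ref{fact:tensorgivesomega}. (Here I would use that $\ti\vf(0)=0$, $\check\vf(0)=0$, $\ti\vf([\omega_R])=[\omega_{\ti R}]$, $\check\vf([\omega_R])=[\omega_{\check R}]$, so the trivial classes do occupy the "same" coset pattern, and solving the linear conditions pins the non-trivial candidates down to at most one conjugate pair.) This immediately yields $|\s_0(R)|\le 4$.

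The routine part is the bookkeeping of which rows/columns to delete, the exact kernels, and the images of the basis classes under $\ti\vf,\check\vf$ — all of this is a direct transcription of the computations already carried out in the proof of Theorem~\ref{thm:onesided}, using Fact~\ref{omega} and~\cite[Propositions~2.1 and~2.4, Corollary~2.3]{Co}. The main obstacle is purely linear-algebraic: verifying that the two rank-$1$ kernels, sitting inside the rank-$(h+k+1)$ lattice $\Cl(R)$, are such that the four preimage-cosets meet in at most four points. The cleanest way is to change basis in $\Cl(R)$ so that $[\p_1]$ and the collapsed $\q$-class become the first two coordinates; then $\ti\vf$ and $\check\vf$ are, up to the identifications, the two coordinate projections killing these, the conditions "$\ti\vf([N])\in\{0,[\omega_{\ti R}]\}$" and "$\check\vf([N])\in\{0,[\omega_{\check R}]\}$" fix all coordinates of $[N]$ except the first two, and the two remaining free coordinates are constrained to lie in a $2\times 2$ grid of possibilities — exactly four. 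One must also confirm that collapsing a single corner never creates a coincidental corner in $\ti Y$ or $\check Y$ (clear, since we only merge a corner with a neighbor on the same side and the no-coincidence hypothesis on $Y$ is inherited), so the induction hypothesis genuinely applies. I would close by noting that the two extra candidates $[M_1],[M_2]$ are conjugate under $\Hom{-}{\omega_R}$, which is the structural reason the later Theorem~\ref{thm:mixed} can hope to rule them out and push the bound down to $2$.
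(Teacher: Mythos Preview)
Your proposal has a genuine gap: the assertion that the kernels of the localization maps $\ti\vf$ and $\check\vf$ are rank-$1$ subgroups of $\Cl(R)$ is false in the two-sided setting. In the one-sided proof of Theorem~\ref{thm:onesided} this holds because there is only one kind of inside corner, but once both kinds are present the deletion of rows and columns that removes a lower inside corner typically removes several upper inside corners as well. Concretely, if you invert $x_{a_hb_{h+1}}$ to collapse the last lower corner, the resulting ladder $\ti Y$ loses every upper corner $(c_j,d_j)$ with $c_j\geq a_h$, i.e.\ every $j\geq\kappa_1$; the kernel of $\ti\vf$ is then generated by $[\q_{h+1}],[\p_{\kappa_1}],\dots,[\p_k]$ and has rank $k-\kappa_1+2$. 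Dually, inverting $x_{c_1d_0}$ to collapse the first upper corner yields a kernel generated by $[\q_1],\dots,[\q_{\eta_2+1}]$, of rank $\eta_2+1$. In a thick ladder one has $\kappa_1=1$ and $\eta_2=h$, so both kernels have rank at least~$2$. Your ``change basis and project onto two coordinates'' picture therefore collapses: the preimages $\ti\vf^{-1}(\{0,[\omega_{\ti R}]\})$ and $\check\vf^{-1}(\{0,[\omega_{\check R}]\})$ are unions of high-rank cosets, not affine lines, and their intersection need not be finite, let alone of cardinality~$4$.

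This is exactly why the paper's proof uses \emph{four} localizations rather than two---one at each of the extremal corner variables $x_{a_0b_1}$, $x_{a_hb_{h+1}}$, $x_{c_1d_0}$, $x_{c_{k+1}d_k}$---and then carries out a lengthy case analysis (producing intermediate candidates $[N_1],\dots,[N_{12}]$) to whittle the free parameters down. The interaction between the $\q$-part and the $\p$-part of the kernels is governed by the integers $\eta_1,\eta_2,\kappa_1,\kappa_2$, and only after all four localizations do the candidates reduce to the explicit short list $[M_1],\dots,[M_6]$, subject to ladder-shape constraints. Your two-localization count simply does not see enough of $\Cl(R)$ to reach a finite bound.
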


\begin{proof}
  As per Fact \ref{omega}, $[\omega_R] = \sum_{i=1}^{h+1} \lambda_i [\q_i] + \sum_{j=1}^k \delta_j[\p_j]$.  The letters $M_i,N_i$ will be used to denote (possible) semidualizing modules of $R$.  First, we invert $x_{a_0b_1}$ and
  obtain the ladder $\ti{Y}$ by deleting rows $a_0,a_0+1\dots,a_1-1$ and columns
  $b_1+1,b_1+2\dots,b_0$ of $Y$. The kernel of the natural map $\varphi \colon
  \Cl(R) \to \Cl(\ti{R})$ is generated by $[\q_1],[\p_1],[\p_2],\dots,
  [\p_{\kappa_2}]$. (Note that it is possible for $\kappa_2 = 0$, in which case, the kernel is generated only by $[\q_1]$.  The argument below allows for this possibility.) By assumption, the semidualizing modules of $\ti{R}$ are
  $[\ti{R}]$ and 
  its canonical class, hence the possible semidualizing modules of
  $R$ are
  \begin{align*}
    [N_1] &= r_1[\q_1] + \sum_{j=1}^{\kappa_2}s_j[\p_j], \text{ and}\\
    [N_2] &= r_1[\q_1] + \sum_{j=1}^{\kappa_2}s_j[\p_j]
    + \sum_{i=2}^{h+1}\gl_i[\q_i] + \sum_{j=\kappa_2+1}^k \gd_j[\p_j],
  \end{align*}
  where $r_1,s_j \in \bbz$ and $[\omega_{\ti{R}}]=\sum_{i=1}^{h}\gl_{i+1}[\ti{\q}_i] + \sum_{j=1}^{k-\kappa_2} \gd_{j+\kappa_2}[\ti{\p}_j]$.
  
  Next, we invert $x_{a_hb_{h+1}}$ and obtain (a new) $\ti{Y}$ by deleting rows
  $a_h+1,a_h+2,\dots,a_{h+1}$ and columns $b_{h+1},b_{h+1}+1,\dots,b_h-1$
  of $Y$. The kernel of the natural map $\varphi \colon \Cl(R) \to \Cl(\ti{R})$
  is generated by $[\q_{h+1}],[\p_{\kappa_1}],[\p_{\kappa_1+1}],\dots,[\p_k]$, and $[\omega_{\ti R}] = \sum_{i=1}^h\gl_i[\ti{\q}_i] + \sum_{j=1}^{\kappa_1-1} \gd_j[\ti{\p}_j]$.
  
  Suppose that $\varphi([N_1])=0$. If $\kappa_1 > \kappa_2$, then $0=\varphi([N_1])= r_1[\ti{\q}_1] + \sum_{j=1}^{\kappa_2}s_j[\ti{\p}_j]$, so $[N_1]=0$. Since we are seeking nontrivial semidualizing modules, we may assume here that $\kappa_1\leq \kappa_2$, in which case  $0=\varphi([N_1])= r_1[\ti{\q}_1] + \sum_{j=1}^{\kappa_1-1} s_j[\ti{\p}_j], $ and hence, $[N_1]$ equals
  \[
    [N_3] = \sum_{j=\kappa_1}^{\kappa_2} s_j[\p_j], \text{ where } \kappa_1 \leq \kappa_2.
  \]
  
  Suppose that $\varphi([N_1])=[\omega_{\ti{R}}]$.  Because neither relation among the $\kappa_i$ may be discarded, we allow for both cases (where the notation $\sum_r^s$ for $s < r$ is simply a vacuous sum).  Then $[N_1]$ equals
   \[
    [N_4] = \gl_1[\q_1]+\sum_{j=1}^{\min(\kappa_1-1,\kappa_2)}\gd_j[\p_j]
    +\sum_{j=\kappa_1}^{\kappa_2} s_j[\p_j],
  \]
  where $\gl_i=0$ for all $1<i<h+1$, and $\gd_j=0$ for $\kappa_2<j$ and $j<\kappa_1$, a condition which may or may not be satisfied.  (In particular, it's not satisfied if $\kappa_1 \leq \kappa_2 + 1$.)

  Suppose that $\varphi([N_2])=0$. Because neither relation among the $\kappa_i$ may be discarded, we allow for both cases.  We have $\varphi([N_2]) = r_1[\ti{\q}_1] + \sum_{j=1}^{\min(\kappa_1-1, \kappa_2)}s_j[\ti{\p}_j] +  \sum_{i=2}^h\gl_i[\ti{\q}_i] + \sum_{j=\kappa_2+1}^{\max(\kappa_1-1, \kappa_2)} \gd_j[\ti{\p}_j]$, hence, $[N_2]$ equals
  \[
    [N_5] = \gl_{h+1}[\q_{h+1}] + \sum_{j=\kappa_1}^{\kappa_2} s_j[\p_j]
    + \sum_{j=\max(\kappa_1-1,\kappa_2)+1}^k \gd_j[\p_j],
  \]
  where $\gl_i=0$ for all $1<i<h+1$, and $\gd_j=0$ for $\kappa_2<j$ and $j<\kappa_1$, a condition which may or may not be satisfied.  (In particular, it's not satisfied if $\kappa_1 \leq \kappa_2 + 1$.)

  Suppose that $\varphi([N_2])=[\omega_{\ti{R}}]$. If $\kappa_1 > \kappa_2$, then
  $[N_2]=[\omega_R]$. So we may assume that $\kappa_1\leq \kappa_2$, in which case $[N_2]$
  equals
  \[
    [N_6] = \sum_{j=1}^{h+1}\gl_i[\q_i]+\sum_{j=1}^{\kappa_1-1}\gd_j[\p_j]
    +\sum_{j=\kappa_1}^{\kappa_2} s_j[\p_j]+\sum_{j=\kappa_2+1}^k \gd_j[\p_j],
    \text{ where } \kappa_1\leq \kappa_2.
  \]
  
  Now we invert $x_{c_1d_0}$ and obtain $\ti{Y}$ by deleting rows
  $c_0,c_0+1,\dots,c_1-1$ and columns $d_1+1,d_1+2,\dots,d_0$ of $Y$. The kernel
  of the natural map $\varphi \colon \Cl(R) \to \Cl(\ti{R})$
  is generated by $[\q_1],[\q_2],\dots,[\q_{\eta_2+1}]$, and $\varphi([\p_1])
  =[\ti{\q}_1]$. Let us write $[\ti{\p}_0]=[\ti{\q}_1]$.
  We have
  \[
  [\omega_{\ti{R}}] = \sum_{i=2}^{h-\eta_2+1}\gl_{i+\eta_2}[\ti{\q}_i] + \delta_1 [\ti{\q}_1] + \sum_{j=1}^{k-1} \gd_{j+1}[\ti{\p}_j].
  \]

   If $\varphi([N_3])=0$, then $0=\varphi([N_3])=\sum_{j=\kappa_1}^{\kappa_2} s_j[\ti{\p}_{j-1}]$ implies that $[N_3]=0$.  If $\varphi([N_3])=
  [\omega_{\ti{R}}]$, then $[N_3]$ gives us the possibly nontrivial
  semidualizing module
  \[
    [N_7] = \sum_{j=\kappa_1}^{\kappa_2} \gd_j[\p_j], \text{ where } \kappa_1 \leq \kappa_2,
  \]
  and $\gl_i=0$ for all $i>\eta_2+1$, and $\gd_j=0$ for all $j<\kappa_1$ or $j>\kappa_2$.
  
  
   If $\varphi([N_4])=0$, then since $0=\varphi([N_4]) = \sum_{j=1}^{\min(\kappa_1-1,\kappa_2)}\gd_j[\ti{\p}_{j-1}]
    +\sum_{j=\kappa_1}^{\kappa_2} s_j[\ti{\p}_{j-1}]$, $[N_4]$ gives us the candidate
  \[
    [N_8] = \gl_1[\q_1],
  \]
   where $\gl_i=0$ for all $1<i<h+1$, and $\gd_j=0$ for all $j$ such that $\kappa_2<j<\kappa_1$ or $1\leq j \leq
  \min(\kappa_1-1,\kappa_2)$. Whether or not $\kappa_1 > \kappa_2$ or $\kappa_1 \leq \kappa_2$, we conclude that $\gd_j = 0$ for all $j < \kappa_1$.
  
Suppose that $\varphi([N_4])=[\omega_{\ti{R}}]$. If $\eta_2<h$, then $\lambda_i = 0$ for all $i >1$, and $[N_4]=
  [\omega_R]$.  Therefore, we may assume that $\eta_2=h$.  Thus, the
  case $[N_4]$ gives us the candidate
  \[
    [N_9] = \gl_1[\q_1] + \sum_{j=1}^{\kappa_2} \gd_j[\p_j],
  \]
  where $\eta_2=h$, which implies $\kappa_1 = 1$, and $\gl_i=0$ for all $1<i<h+1$ and $\gd_j=0$ for $j>\kappa_2$.


   Suppose that $\varphi([N_5])=0$.  If $\eta_2<h$, then none of the terms in the expression for $[N_5]$ is in $\ker \varphi$, in which case $[N_5] = 0$. Therefore, we may assume that $\eta_2=h$, in which case the only term in the expression for $[N_5]$ that is in $\ker \varphi$ is $\gl_{h+1}[\q_{h+1}]$.  Thus, $[N_5]$ gives us the candidate
     \[
    [N_{10}] = \gl_{h+1}[\q_{h+1}] = [\omega_R]-[N_9].
  \]
   
   Suppose that $\varphi([N_5])=[\omega_{\ti R}]$.  If $\eta_2 < h$ and $\kappa_1 \leq \kappa_2$,
   then we have
  \[
  \varphi([N_5]) = \gl_{h+1}[\ti{\q}_{h-\eta_2+1}] + \sum_{j=\kappa_1-1}^{\kappa_2-1}s_{j+1}[\ti{\p}_j] + \sum_{j=\kappa_2}^{k-1} \gd_{j+1}[\ti{\p}_j].
  \]
  If $\eta_2 < h$ and $\kappa_1 > \kappa_2$, then we have
  \[
  \varphi([N_5]) = \gl_{h+1}[\ti{\q}_{h-\eta_2+1}] + \sum_{j=\kappa_1-1}^{k-1} \gd_{j+1}[\ti{\p}_j].
  \]
  If $\eta_2 = h$ and $\kappa_1 \leq \kappa_2$, then we have
  \[
     [\omega_{\ti{R}}] = \delta_1 [\ti{\q}_1] + \sum_{j=1}^{k-1} \gd_{j+1}[\ti{\p}_j] \quad \text{and} \quad
     \varphi([N_5]) = \sum_{j=\kappa_1-1}^{\kappa_2-1}s_{j+1}[\ti{\p}_j] + \sum_{j=\kappa_2}^{k-1} \gd_{j+1}[\ti{\p}_j].
  \]
  If $\eta_2 = h$ and $\kappa_1 > \kappa_2$, then we have
  \[
     [\omega_{\ti{R}}] = \delta_1 [\ti{\q}_1] + \sum_{j=1}^{k-1} \gd_{j+1}[\ti{\p}_j] \quad \text{and} \quad
     \varphi([N_5]) = \sum_{j=\kappa_1-1}^{k-1} \gd_{j+1}[\ti{\p}_j].
  \]
  In all cases, $[N_5]$ gives us the candidate
       \[
    [N_{11}] = \gl_{h+1}[\q_{h+1}] + \sum_{j=\kappa_1}^k \gd_j[\p_j]  = [\omega_R]-[N_8],  
    \]
  where $\gl_i=0$ for all $1<i<h+1$ and $\gd_j = 0$ for $j < \kappa_1$.

Suppose that $\varphi([N_6])=0$.  Then
\[
  0=\varphi([N_6]) = \sum_{i=2}^{h-\eta_2+1}\gl_{i+\eta_2}[\ti{\q}_i] + \sum_{j=1}^{\kappa_1-1}\gd_j[\ti{\p}_{j-1}]
    +\sum_{j=\kappa_1}^{\kappa_2} s_j[\ti{\p}_{j-1}]+\sum_{j=\kappa_2+1}^k \gd_j[\ti{\p}_{j-1}],
\]
  where $\kappa_1 \leq \kappa_2$. So $[N_6]$ gives us the candidate     
  \[
    [N_{12}] = \sum_{i=1}^{\eta_2+1} \gl_i[\q_i] = [\omega_R]-[N_7], \text{ where } \kappa_1 \leq \kappa_2,
  \]  
$\gl_i=0$ for all $i> \eta_2+1$ and $\gd_j=0$ for all $j < \kappa_1$ or $j > \kappa_2$.  

 If $\varphi([N_6])=[\omega_{\ti{R}}]$, then $[N_6] = [\omega_R]$.

  Finally, we invert $x_{c_{k+1}d_k}$ and obtain $\ti{Y}$ by deleting rows
  $c_k+1,c_k+2,\dots,c_{k+1}$ and columns $d_{k+1},d_{k+1}+1,\dots,d_k-1$
  of $Y$. Then $\Cl(\ti{R})$ is generated by the basis elements 
  $[\ti{\q}_1],[\ti{\q}_2],\dots,[\ti{\q}_{\eta_1}],[\ti{\p}_1],[\ti{\p}_2],
  \dots,[\ti{\p}_{k-1}]$, and
  \begin{alignat*}{2}
    [\omega_{\ti{R}}] &= \sum_{i=1}^{\eta_1} \ti{\gl}_i[\ti{\q}_i]
    + \sum_{j=1}^{k-1} \ti{\gd}_j[\ti{\p}_j], &&\text{ where}\\
    \ti{\gl}_i &= \gl_i &&\text{ for all } i<\eta_1,\\
    \ti{\gl}_{\eta_1} &= c_k+d_k-a_{\eta_1-1}-b_{\eta_1-1},\\
    \ti{\gd}_j &= \gd_j &&\text{ if } c_j<a_{\eta_1-1}, \text{ and}\\
    \ti{\gd}_j &= c_k+d_k-c_j-d_j &&\text{ otherwise.}
  \end{alignat*}
  
  Suppose that $\varphi([N_7])=0$ (equivalently, $\varphi([N_{12}])=
  [\omega_{\ti{R}}]$) under the natural map $\varphi \colon \Cl(R)
  \to \Cl(\ti{R})$. If $\kappa_2\neq k$, then $0=\varphi([N_7]) = \sum_{j= \kappa_1}^{\kappa_2} \gd_j[\ti{\p}_j]$, so $[N_7]=0$. If $\kappa_2=k$
  (equivalently, $\eta_1= 1$), then 
  \[
    0=\varphi([N_7])=\gd_k[\ti{\q}'_1]+ \sum_{j=\kappa_1}^{k-1} \gd_j[\ti{\p}_j]
    = -\gd_k[\ti{\q}_1]+\sum_{j=\kappa_1}^{k-1} (\gd_j-\gd_k)[\ti{\p}_j],
  \]
  where the last equality follows from \cite[Corollary 2.3(i), with $I_1 = \{1, \dots, k-1\}$]{Co}.  Since 
  the $[\ti{\q}_1], [\ti{\p}_j]$ are basis elements, we again get $[N_7]=0$.  Thus, $\varphi([N_7])=0$ produces no candidate for a semidualizing module.
  
  Suppose that $\varphi([N_{12}])=0$ (equivalently, $\varphi([N_7])=
  [\omega_{\ti{R}}]$). If $\eta_2 < \eta_1$, then  $\varphi([N_{12}]) =\sum_{i=1}^{\eta_2+1} \gl_i[\ti{\q}_i]$ implies that 
    $[N_{12}]=0$.
  If $\eta_2\geq \eta_1$, then
  \[
    0 = \varphi([N_{12}]) = \sum_{i=1}^{\eta_1-1} \gl_i[\ti{\q}_i]
    + \left( \sum_{i=\eta_1}^{\eta_2+1} \gl_i \right) [\ti{\q}_{\eta_1}],
  \]
  (where if $\eta_1 = 1$, then the first sum is vacuous).  Since 
  the $[\ti{\q}_1], \dots, [\ti{\q}_{\eta_1}]$ are basis elements, we must have $\lambda_i = 0$ for $1 \leq i \leq \eta_1- 1$ and $\sum_{i=\eta_1}^{\eta_2+1} \lambda_i= 0$.
  
Thus, $[N_{12}]$, and hence its pair $[N_7]$, give us, respectively, the candidates
  \[
    [M_1] = \sum_{i=\eta_1}^{\eta_2+1} \gl_i[\q_i] \quad \text{and} \quad
    [M_2] = \sum_{j=\kappa_1}^{\kappa_2} \gd_j[\p_j] = [\omega_R]-[M_1],
  \]
  where $\eta_1\leq \eta_2$, $\kappa_1\leq \kappa_2$, $\gl_i=0$ for all $i<\eta_1$ or $i>\eta_2+1$,
  $\gd_j=0$ for all $j<\kappa_1$ or $j>\kappa_2$, and $\gl_{\eta_1}+\gl_{\eta_1+1}+\dots+
  \gl_{\eta_2+1}=0$. In other words, the corners
  $(a_0,b_0)$, $(a_{h+1},b_{h+1})$ together with all inside corners,
  except $(a_i,b_i),(c_j,d_j)$ for $\eta_1 \leq i \leq \eta_2$ and
  $\kappa_1 \leq j \leq \kappa_2$, all lie on the same antidiagonal. Furthermore,
  we have $a_{\eta_1} \leq a_{\eta_2} \leq c_1 \leq c_{\kappa_1} \leq c_{\kappa_2}$ and
  $b_{\eta_2} \leq b_{\eta_1} \leq d_k \leq d_{\kappa_2} \leq d_{\kappa_1}$, and by
  assumption, no two inside corners coincide. Hence
  $(a_i,b_i) \lneqq (c_j,d_j)$ for all $\eta_1 \leq i \leq \eta_2$ and
  $\kappa_1 \leq j \leq \kappa_2$.
  
  
  Suppose that $\varphi([N_{10}])=0$ (equivalently, $\varphi([N_9])=
  [\omega_{\ti{R}}]$). Since $\eta_2=h$ in this case, we have
  $0=\varphi([N_{10}])=\gl_{h+1}[\ti{\q}_{\eta_1}]$, so $[N_{10}]=0$. Thus, $\varphi([N_{10}])=0$ produces no candidate for a semidualizing module.

  Suppose that $\varphi([N_9])=0$ (equivalently, $\varphi([N_{10}])=
  [\omega_{\ti{R}}]$). If $\kappa_2<k$, then  $\varphi([N_9])=\gl_1[\ti{\q}_1]+ \sum_{j=1}^{\kappa_2} \gd_j[\ti{\p}_j]$ implies that $[N_9]=0$ since the $[\ti{\q}_i], [\ti{\p}_j]$ are basis elements. If $\kappa_2=k$, then
  \[
    0 = \varphi([N_9]) = \gl_1[\ti{\q}_1] + \gd_k[\ti{\q}'_1]
    +\sum_{j=1}^{k-1} \gd_j[\ti{\p}_j]
    = (\gl_1-\gd_k)[\ti{\q}_1]+\sum_{j=1}^{k-1} (\gd_j-\gd_k)[\ti{\p}_j],
  \]
   where the last equality follows from \cite[Corollary 2.3(i), with $I_1 = \{1, \dots, k-1\}$]{Co}.  Since 
  the $[\ti{\q}_1], [\ti{\p}_j]$ are basis elements, it follows that $\gl_1=\gd_1=\gd_2=\dots=\gd_k$.  Therefore, $[N_9],[N_{10}]$ give us the
  candidates
  \[
    [M_3] = \gl_1[\q_1] + \sum_{j=1}^k \gl_1[\p_j]
    =-\gl_1[\q'_1] \quad \text{and} \quad
    [M_4] = \gl_{h+1}[\q_{h+1}] = [\omega_R]-[M_3],
  \]
  where $\eta_2=h$, $\kappa_1=1$, $\kappa_2=k$, hence $\eta_1=1$, and $\gl_1=\gd_1=\gd_2=\dots=\gd_k$,
  and $\gl_2=\gl_3=\dots=\gl_h=0$. Since $\eta_1=\kappa_1=1$, we have $(a_i,b_i)
  \lneqq (c_j,d_j)$ for all $1 \leq i \leq h$ and $1 \leq j \leq k$.
  
  
  If $\varphi([N_8])=0$, then $\varphi([N_8])= \gl_1[\ti{\q}_1]$ implies $[N_8]=0$. Suppose that $\varphi([N_8])
  =[\omega_{\ti{R}}]$. In this case, we have $\gl_i=\gd_j=0$ for all $1<i<h+1$
  and $j<\kappa_1$, $\gl_1=\ti{\gl}_1$, and $\ti{\gl}_i=\ti{\gd}_j=0$ for all
  $1<i<\eta_1+1$ and $j<k$. Since $\gl_i=0$ for all $1<i<h+1$, the corners
  $(a_i,b_i)$ for $1\leq i\leq h$ all lie on the same antidiagonal. Since $\gl_1=\ti{\gl}_1$ and $\ti{\gl}_i=0$ for all $1<i<\eta_1+1$,
  where $\ti{\gl}_i=\gl_i=a_i+b_i-a_{i-1}-b_{i-1}$ for
  $2 \leq i \leq \eta_1-1$ and
  $\ti{\gl}_{\eta_1} = c_k+d_k-a_{\eta_1-1}-b_{\eta_1-1}$, the corners
  $(a_1,b_1),\ldots,(a_h,b_h)$ and $(c_k,d_k)$ lie on the same antidiagonal (whether or not $\eta_1 = 1$). By definition (recall Fact \ref{omega}), $\gd_j:=
  a_{i_j}+b_{i_j}-c_j-d_j= 0$ for
  all $j<\kappa_1 = \min\{i \mid c_i \geq a_h\}$
  i.e., $i_j \leq h$, where $i_j=\min\{i\colon a_i>c_j\}$, the corners $(a_1,b_1),\ldots, (a_h,b_h)$, $(c_k,d_k)$
  and $(c_j,d_j)$ for $1\leq j<\kappa_1$ all
  lie on the same antidiagonal. Since $\ti{\gd}_j=c_k+d_k-c_j-d_j=0$ for all
  $j$ such that $c_j\geq a_{\eta_1-1}$, and $c_{\kappa_1}\geq a_h \geq a_{\eta_1-1}$, the
  corners $(c_j,d_j)$ for $\kappa_1 \leq j \leq k$ all lie on the same antidiagonal. Thus, $\lambda_{h+1} := a_{h+1} + b_{h+1} -a_h - b_h = a_{h+1} + b_{h+1} - c_j - d_j= \delta_j$ for $\kappa_1 \leq j \leq k$.
  Hence $[N_8]$ and $[N_{11}]$ give us the candidates
      \begin{align*}
    [M_5]&=\gl_1[\q_1] \quad \text{and}\\
    [M_6]&=\gl_{h+1}[\q_{h+1}]+\sum_{j=\kappa_1}^k \gl_{h+1}[\p_j] 
    = -\gl_{h+1}[\q'_{h+1}]
    = [\omega_R]-[M_5],
  \end{align*}
  (where the last equality again follows from \cite[Corollary 2.3(i), with $I_1 = \{1, \dots, k-1\}$]{Co} and) where all inside corners lie on the same antidiagonal.
  
  To summarize, the possible semidualizing modules of $R$ are listed below, along with the conditions in which they have the potential to exist based upon the analysis above:
  \begin{alignat*}{3}
    &[M_1],[M_2],[M_3],[M_4] &\quad & \text{if } (a_i,b_i) \lneqq (c_j,d_j)
    \text{ for all } 1 \leq i \leq h \text{ and } 1 \leq j \leq k,\\
    &[M_5],[M_6] && \text{if } (a_i,b_i) \nleq (c_j,d_j)
    \text{ for all } 1 \leq i \leq h \text{ and } 1 \leq j \leq k, \text{ and}
    \hspace{-9pt}\\
    &[M_1],[M_2] && \text{otherwise.} & \qedhere
  \end{alignat*}
\end{proof}

With this summary in hand, 
it is convenient to address two cases based upon the shape of the ladder.  In particular, we use the descriptives  ``thick'' and ``thin''. The most basic case of each such ladder is outlined below, where there is exactly one lower, and one upper, inside corner.  Casually speaking, a ``thick'' ladder is one in which every lower inside corner $(a_i,b_i)$ is strictly less than every upper inside corner $(c_j,d_j)$ (i.e., the case of $[M_1]-[M_4]$ above), while a  ``thin'' ladder is the diametric opposite of this; i.e., one such that $(a_i,b_i)\nleq (c_j,d_j)$, for all $1 \leq i \leq h$ and $1 \leq j \leq k$.  Note that for the latter, it is possible for upper and lower inside corners to lie on the same antidiagonal.
For the ladder on the right, it is necessary that $a_1<c_1$ if $b_1 > d_1$.  The result for the case $a_1>c_1$ follows by symmetry.
  
  \begin{center}
    \begin{picture}(252,67)
      \put(-20,12){\line(1,0){63}}
      \put(43,12){\line(0,1){20}}
      \put(47,20){$(c_1,d_1)$}
      \put(43,32){\line(1,0){37}}
      \put(80,32){\line(0,1){32}}
      \put(17,64){\line(1,0){63}}
      \put(83,60){$(a_0,b_0)=(c_0,d_0)$}
      \put(17,44){\line(0,1){20}}
      \put(-20,44){\line(1,0){37}}
      \put(-16,49){$(a_1,b_1)$}
      \put(-20,12){\line(0,1){32}}
      \put(-45,0){$a_1 \leq c_1,\ b_1 \leq d_1,(a_1,b_1) \neq (c_1,d_1)$}
      \put(-5,-15){{\it basic thick ladder}}
      \put(192,12){\line(1,0){20}}
      \put(212,12){\line(0,1){20}}
      \put(213,20){$(c_1,d_1)$}
      \put(212,32){\line(1,0){60}}
      \put(272,32){\line(0,1){32}}
      \put(252,64){\line(1,0){20}}
      \put(252,44){\line(0,1){20}}
      \put(192,44){\line(1,0){60}}
      \put(220,49){$(a_1,b_1)$}
      \put(192,12){\line(0,1){32}}
      \put(217,0){$b_1>d_1$}
      \put(195,-15){{\it basic thin ladder}}
    \end{picture}
  \end{center}
\vskip.25in

\begin{defn}
  Let $Y$ be a ladder. By abuse of language, we say that we \textbf{reflect along the
  antidiagonal} if we form the ladder $\ti{Y}$, where
  $\ti{Y}_{ij}= X_{a_{h+1}-j+a_0,b_0-i+b_{h+1}}$.
  The ladder $\ti{Y}$ has corners $(\ti{a}_0,\ti{b}_0) = (b_{h+1},a_{h+1})$,
  $(\ti{a}_1,\ti{b}_1) = (b_0-d_1+b_{h+1},a_{h+1}-c_1+a_0)$, \ldots,
  $(\ti{a}_k,\ti{b}_k) = (b_0-d_k+b_{h+1},a_{h+1}-c_k+a_0)$,
  $(\ti{a}_{k+1},\ti{b}_{k+1}) = (b_0,a_0)$,
  $(\ti{c}_1,\ti{d}_1) = (b_0-b_1+b_{h+1},a_{h+1}-a_1+a_0)$, \ldots,
  $(\ti{c}_h,\ti{d}_h) = (b_0-b_h+b_{h+1},a_{h+1}-a_h+a_0)$.
\end{defn}

\begin{thm}[Thick Ladder Theorem] \label{thm:thick} 
  Let $Y$ be a two-sided 2-connected ladder, with $h\geq 1$ lower inside corners and $k\geq 1$ upper inside corners, such that $(a_i,b_i) \lneqq (c_j,d_j)$
  for all $1 \leq i \leq h$ and $1 \leq j \leq k$.
  Let $R = R_2(Y)$. Then $|\s_0(R)| \leq 2$.
\end{thm}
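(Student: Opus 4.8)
The plan is to run an induction on the number $h+k$ of inside corners, using Proposition~\ref{prop:4corners} to cut down to a short list of candidate divisor classes and then eliminating the nontrivial candidates by explicit monomial computations in the style of the proof of Theorem~\ref{thm:onesided}.

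First I would set up the induction. The base cases are the one-sided ladders, handled by Theorem~\ref{thm:onesidedpathconnected}, so assume $h,k\geq 1$ and (inductive hypothesis) that every $2$-connected ladder with no coincidental corners and strictly fewer inside corners has only trivial semidualizing modules. Then Proposition~\ref{prop:4corners} applies and yields $\s_0(R)\subseteq\{[M_1],[M_2],[M_3],[M_4]\}$. A short computation shows that the thick hypothesis $(a_i,b_i)\lneqq(c_j,d_j)$ forces the parameters to their extreme values $\eta_1=\kappa_1=1$, $\eta_2=h$, $\kappa_2=k$, so that $[M_1]=\sum_{i=1}^{h+1}\lambda_i[\q_i]$ with $\sum_{i=1}^{h+1}\lambda_i=0$, and $[M_3]=-\lambda_1[\q'_1]$, while $[M_2]=[\omega_R]-[M_1]$ and $[M_4]=[\omega_R]-[M_3]$. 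By Fact~\ref{fact:tensorgivesomega}, $M_2$ (resp.\ $M_4$) is semidualizing exactly when $M_1$ (resp.\ $M_3$) is, and it is trivial as soon as $M_1$ (resp.\ $M_3$) is; hence it suffices to show $[M_1]\in\{0,[\omega_R]\}$ and $[M_3]\in\{0,[\omega_R]\}$.

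Next I would eliminate $M_3$. If $\lambda_1=0$ then $[M_3]=0$, so after possibly interchanging $(M_3,M_4)$ — or reflecting $Y$ along the antidiagonal — I may assume $\lambda_1>0$. Using Fact~\ref{omega}(3) and Lemma~\ref{lem:purepower} I would realize $M_3$ as a concrete monomial ideal: a power of $\q'_1$, equivalently the intersection $\q_1^{\lambda_1}\cap\bigcap_{j=1}^{k}\p_j^{\lambda_1}$, whose minimal generators involve variables from a single row and a single column of $Y$; and I would realize $M_4\cong\Hom{M_3}{\omega_R}$ as a concrete monomial ideal via Fact~\ref{omega}(2) and Lemma~\ref{lem:intersection}. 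When the multiplication map $\mu\colon M_3\otimes_R M_4\to M_3M_4$ is visibly non-injective, this contradicts Fact~\ref{fact:multisiso} exactly as in Case~\ref{case:notiso}. In the remaining sign pattern, where $\mu$ might actually be an isomorphism, I would instead exhibit a minimal generator of a syzygy module of $M_4$ that persists after applying $-\otimes_R M_3$ but does not lie in the image of the next differential of a minimal free resolution, contradicting $\Tor{1}{M_4}{M_3}=\Tor{1}{\Hom{M_3}{\omega_R}}{M_3}=0$ from Fact~\ref{fact:tensorgivesomega}, just as in Case~\ref{case:mingen} and the final case of the proof of Theorem~\ref{thm:onesided}. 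Thus $M_3$ is never a nontrivial semidualizing module.

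Finally I would treat $M_1=\sum_{i=1}^{h+1}\lambda_i[\q_i]$ (necessarily with $\sum_i\lambda_i=0$, so that $M_1$ is nontrivial only if the ambient matrix is square). Since each $[\q_i]$ is represented by an ideal generated by variables in a single row, I would separate the indices with $\lambda_i>0$ from those with $\lambda_i<0$, use the relations of Fact~\ref{omega}(3) together with Lemmas~\ref{lem:purepower} and~\ref{lem:intersection} to write $M_1$ and $M_2\cong\Hom{M_1}{\omega_R}$ as explicit monomial ideals, and split into finitely many sign patterns, each eliminated by a non-injective multiplication map or a nonvanishing first Tor module exactly as in Cases~\ref{case:notiso}--\ref{case:mingen}; reflecting $Y$ along the antidiagonal (which again gives a thick ladder, interchanging rows with columns and the $\q$'s with the $\p$'s) can be used to halve the number of patterns. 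I expect this last step to be the main obstacle: unlike $[M_3]$, the class $[M_1]$ is a genuine mixed-sign combination of several $[\q_i]$, so pinning down the ideal that represents it and its canonical dual requires iterated use of Lemmas~\ref{lem:purepower} and~\ref{lem:intersection}, and one must check that the sharper lcm/Tor argument of Case~\ref{case:mingen} still applies in the subcase where the relevant multiplication map happens to be an isomorphism.
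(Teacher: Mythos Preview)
Your plan matches the paper's proof: invoke Proposition~\ref{prop:4corners} (with Theorem~\ref{thm:onesided} as the inductive base) to reduce to the candidates $[M_1],\dots,[M_4]$, then eliminate each pair by explicit monomial multiplication-map or lcm arguments, using antidiagonal reflection to halve the sign cases. Two small corrections to the case logistics: for $[M_3],[M_4]$, reflection sends $\lambda_1\mapsto-\lambda_{h+1}$, so the pattern $\lambda_1<0<\lambda_{h+1}$ survives both reflection and interchange and must be handled directly by the lcm/minimal-generator count of Case~\ref{case:mingen}; and for $[M_1],[M_2]$ the paper splits on the signs of the $\delta_j$ rather than of all the $\lambda_i$, and multiplication-map non-injectivity already suffices in every subcase---no Tor computation is needed there.
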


\begin{proof}
  By Theorem~\ref{thm:onesided} and the proof of Proposition~\ref{prop:4corners}, we
  only need to show that $[M_1],[M_2],[M_3],[M_4]$ in Proposition~\ref{prop:4corners}
  must be trivial semidualizing modules.
  
  \setcounter{case}{0}
  \begin{case}
  Let us first consider
  \[
    [M_3] = \gl_1[\q_1] + \sum_{j=1}^k \gl_1[\p_j]
    =-\gl_1[\q'_1] \quad \text{and} \quad
    [M_4] = \gl_{h+1}[\q_{h+1}] = [\omega_R]-[M_3],
  \]
  where $\gl_1=\gd_1=\gd_2=\dots=\gd_k$ and $\gl_2=\gl_3=\dots=\gl_h=0$.
  In this case, $\gl_1 = a_{h+1}+b_{h+1}-c_k-d_k < a_{h+1}+b_{h+1}-a_1-b_1 =
  \gl_{h+1}$.

  \begin{subcase} \label{case:gamma1positive}
    Suppose that $\gl_1>0$. Write $M_3 = \q_1^{\gl_1} \cap \p_1^{\gl_1} \cap
    \dots \cap \p_k^{\gl_1}$ and $M_4 = \q_{h+1}^{\gl_{h+1}}$.
    Under the multiplication map of ideals $\mu \colon M_3 \otimes_R M_4 \to
    M_3 M_4$, we have 
    \begin{align*}
      \mu( x_{a_0b_1}^{\gl_1-1} x_{a_0b_0} x_{a_h b_{h+1}}\otimes
      x_{a_hb_1}^{\gl_{h+1}}) 
       = x_{a_hb_1}^{\gl_{h+1}-1} x_{a_0 b_1}^{\gl_1-1} x_{a_h b_1}
      x_{a_0 b_0} x_{a_h b_{h+1}}\\
      = x_{a_hb_1}^{\gl_{h+1}-1}x_{a_0b_1}^{\gl_1}x_{a_hb_0}x_{a_h b_{h+1}}
       {\text{ since }}  x_{a_h b_1}x_{a_0 b_0} = x_{a_h b_0}x_{a_0 b_1} \\
      = \mu( x_{a_0b_1}^{\gl_1} \otimes
      x_{a_hb_1}^{\gl_{h+1}-1} x_{a_hb_0} x_{a_h b_{h+1}}).
    \end{align*}
    Hence $\mu$ is not injective, contradicting Fact~\ref{fact:multisiso}.
  \end{subcase}

  \begin{subcase}
    Suppose that $\gl_{h+1}>0$ and $\gl_1<0$. Let $M_3 = {\q'_1}^{-\gl_1}$,
    $M_4=\q_{h+1}^{\gl_{h+1}}$, and $\omega_R = {\q'_1}^{-\gl_1} \cap
    \q_{h+1}^{\gl_{h+1}}$. As in Case \ref{case:mingen} of Theorem \ref{thm:onesided}, we get a contradiction since the function
    \newcommand{\lcm}{\operatorname{lcm}}%
\[
  \lcm \colon \mingen((\q'_1)^{|\lambda_1|}) \times \mingen(\q_{h+1}^{\lambda_{h+1}}) \to (\q'_1)^{|\lambda_1|} \cap \q_{h+1}^{\lambda_{h+1}}
\]
does not give a bijection of minimal generating sets.
  \end{subcase}
  
  \begin{subcase}
    Suppose that $\gl_{h+1}<0$. We reflect along the antidiagonal to get
    \begin{align*}
      [\omega_{\ti{R}}] &= (a_0+b_0-c_1-d_1)[\ti{\q}_1] +
      (c_k+d_k-a_{h+1}-b_{h+1})[\ti{\q}_{k+1}]\\
      &\hspace{12pt} {} + (a_1+b_1-a_{h+1}-b_{h+1}) [\ti{\p}_1] + \dots +
      (a_h+b_h-a_{h+1}-b_{h+1}) [\ti{\p}_h]\\
      &= -\gl_1 [\ti{\q}_{k+1}] - \gl_{h+1} \left( [\ti{\q}_1]
      + \sum_{j=1}^h [\ti{\p}_j] \right)
    \end{align*}
    We may then use Case~\ref{case:gamma1positive} to reach our contradiction.
  \end{subcase}
  
  \end{case}
  
  \begin{case}
    Now we consider the candidates $[M_1], [M_2]$, where, as a result of the hypotheses, $\eta_1 = 1; \eta_2 = h; \kappa_1 = 1$, and $\kappa_2 = k$.  Thus, $[M_1] = \sum_{i=1}^{h+1} \gl_i [\q_i]$ and
    $[M_2] = [\omega_R] - [M_1] = \sum_{j=1}^k \gd_j [\p_j]$, where
    $\sum_{i=1}^{h+1} \gl_i =0$, i.e.\ $a_0+b_0=a_{h+1}+b_{h+1}$. We note that
    $a_i+b_i < c_j+d_j$ for all $1 \leq i \leq h$ and $1 \leq j \leq k$.
  
  \begin{subcase} \label{case:deltanonneg}
    Suppose that $c_j+d_j \leq a_{h+1}+b_{h+1}$ for all $1 \leq j \leq k$.
    Then $\delta_j \geq 0$ for all $1 \leq j \leq k$ and $\gl_{h+1} > 0$.
    Let us write
    \begin{align*}
      [M_1] &= \sum_{\gl_i > 0} \gl_i [\q_i] + \sum_{\gl_i < 0} -|\gl_i|
      [\q_i]\\
      &= \sum_{\gl_i > 0} \gl_i [\q_i] + \sum_{\gl_i < 0} |\gl_i|
      \left( [\q'_i] + \sum_{j=1}^k [\p_j] \right)\\
      &= \sum_{\gl_i > 0} \gl_i [\q_i] + \sum_{\gl_i < 0} |\gl_i| [\q'_i]
      + \sum_{j=1}^k \sum_{\gl_i < 0} |\gl_i| [\p_j]
    \end{align*}
    We let
    \begin{align*}
      M_1 &= \bigcap_{\gl_i > 0} \q_i^{\gl_i} \cap \bigcap_{\gl_i < 0}
      \left( \q'_i \right)^{|\gl_i|} \cap \bigcap_{j=1}^k
      \p_j^{\sum_{\gl_i < 0} |\gl_i|} \\
      M_2 &= \bigcap_{j=1}^k \p_j^{\delta_j}
    \end{align*}
    Let $r = \max \{ \delta_j \mid 1 \leq j \leq k \}$. Under the multiplication
    map of ideals $\mu \colon M_1 \otimes_R M_2 \to M_1 M_2$, we have 
    
     \[
      \mu \left( x_{a_hb_{h+1}}^{\gl_{h+1}} \prod_{i=1}^h x_{a_{i-1}b_i}^{|\gl_i|}
      \otimes x_{a_hb_1}^r \right)
    = \mu \left( x_{a_hb_{h+1}}^{\gl_{h+1}-1} x_{a_hb_1} \prod_{i=1}^h
      x_{a_{i-1}b_i}^{|\gl_i|} \otimes x_{a_hb_1}^{r-1} x_{a_hb_{h+1}} \right).
    \]
    Hence $\mu$ is not injective, contradicting Fact~\ref{fact:multisiso}.
  \end{subcase}
  
  \begin{subcase} \label{case:somedeltaneg}
    Suppose that $c_j+d_j > a_{h+1}+b_{h+1}$ for some $1 \leq j \leq k$
    and $a_i+b_i < a_0+b_0 = a_{h+1}+b_{h+1}$ for all $1 \leq i \leq h$.
    Then $\gl_1 < 0$ and $\gl_h > 0$.
    Let $\delta_{j_0} = \min\{\delta_j \mid 1 \leq j \leq k\}$. Let us write
    \begin{align*}
      [M_2] &= -\left| \gd_{j_0} \right| [\p_{j_0}] +
      \sum_{j\neq j_0} \gd_j [\p_j]\\
      &= |\gd_{j_0}| \left( [\q_1] + [\q'_1] + \sum_{j\neq j_0} [\p_j] \right)
      + \sum_{j\neq j_0} \gd_j [\p_j]\\
      &= \left| \gd_{j_0} \right| [\q_1] + \left| \gd_{j_0} \right| [\q'_1] +
      \sum_{j\neq j_0} (\gd_j-\gd_{j_0}) [\p_j].
    \end{align*}
    As in Case~\ref{case:deltanonneg}, we let
    \begin{align*}
      M_1 &= \bigcap_{\gl_i > 0} \q_i^{\gl_i} \cap \bigcap_{\gl_i < 0}
      \left( \q'_i \right)^{|\gl_i|} \cap \bigcap_{j=1}^k
      \p_j^{\sum_{\gl_i < 0} |\gl_i|}\\
      M_2 &= \q_1^{|\gd_{j_0}|} \cap (\q'_1)^{|\gd_{j_0}|} \cap
      \bigcap_{j\neq j_0} \p_j^{\delta_j+|\delta_{j_0}|}.
    \end{align*}
    Let $r = \max\left(\{ 0 \} \cup \{\delta_j +1 \mid 1 \leq j \leq k,
    \ j \neq j_0\}\right)$.
    Under the multiplication
    map $\mu \colon M_1 \otimes_R M_2 \to M_1 M_2$, we have
    \begin{align*}
      & \hspace{15pt} \mu \left( x_{a_0b_1}^{|\gl_1|}
      x_{a_h b_{h+1}}^{\gl_{h+1}}
      \prod_{i=2}^h x_{a_{i-1}b_i}^{|\gl_i|} \otimes x_{a_0b_1}^{|\gd_{j_0}|-1}
      x_{a_hb_{h+1}}^r x_{a_{h+1}b_1} x_{a_0b_0} \right)\\
      &= x_{a_0b_1}^{|\gl_1|+|\gd_{j_0}|-1} x_{a_h b_{h+1}}^{\gl_{h+1}+r-1}
      \left( x_{a_h b_{h+1}} x_{a_{h+1}b_1} \right) x_{a_0b_0}
      \prod_{i=2}^h x_{a_{i-1}b_i}^{|\gl_i|}\\
      &= x_{a_0b_1}^{|\gl_1|+|\gd_{j_0}|-1} x_{a_h b_{h+1}}^{\gl_{h+1}+r-1}
      \left( x_{a_hb_1} x_{a_{h+1} b_{h+1}} \right) x_{a_0b_0}
      \prod_{i=2}^h x_{a_{i-1}b_i}^{|\gl_i|}\\
      &= \mu \left( x_{a_0b_1}^{|\gl_1|-1}
      x_{a_h b_{h+1}}^{\gl_{h+1}-1} x_{a_h b_1}
      \prod_{i=2}^h x_{a_{i-1}b_i}^{|\gl_i|} \otimes x_{a_0b_1}^{|\gd_{j_0}|}
      x_{a_hb_{h+1}}^r x_{a_{h+1}b_{h+1}} x_{a_0b_0} \right).
    \end{align*}
    Again $\mu$ is not injective, contradicting Fact~\ref{fact:multisiso}.
  \end{subcase}
  
  \begin{subcase}
    Suppose that $a_i+b_i \geq a_0+b_0 = a_{h+1}+b_{h+1}$ for some
    $1 \leq i \leq h$, so that $c_j+d_j > a_{h+1}+b_{h+1}$ for all
    $1 \leq j \leq k$. We can then reflect along the antidiagonal and
    reduce to Case~\ref{case:deltanonneg} or \ref{case:somedeltaneg}. \qedhere
  \end{subcase}
  
  \end{case}
\end{proof}

\begin{defn}
  Let $Y$ be a two-sided ladder. We say that $Y$ is a \textbf{spine} if:
  \begin{itemize}
    \item $h=k$, $a_1<c_1<a_2<c_2<\dots<a_h<c_h$ and $b_1>d_1>b_2>d_2>\dots>
    b_h>d_h$; or
    \item $h=k$, $c_1<a_1<c_2<a_2<\dots<c_h<a_h$ and $d_1>b_1>d_2>b_2>\dots>
    d_h>b_h$; or
    \item $h=k+1$, $a_1<c_1<a_2<c_2<\dots<a_k<c_k<a_{k+1}$ and
    $b_1>d_1>b_2>d_2>\dots>b_k>d_k>b_{k+1}$; or
    \item $k=h+1$, $c_1<a_1<c_2<a_2<\dots<c_h<a_h<c_{h+1}$ and
    $d_1>b_1>d_2>b_2>\dots>d_h>b_h>d_{h+1}$.
  \end{itemize}
\end{defn}

\begin{defn}
  Let $Y$ be a two-sided connected 2-connected ladder such that $(a_i,b_i) \nleq (c_j,d_j)$
  for all $1 \leq i \leq h$ and $1 \leq j \leq k$. We define the \textbf{spine $\ti{Y}$
  of $Y$} inductively as follows. Assume that $a_1<c_1$. If $a_1>c_1$, the
  definition is similar. Start with $u=1$ and repeat the following steps.
  \begin{enumerate}
    \item If $(a_u,b_u)$ is not an inside corner, then stop. Otherwise, suppose
    that $a_u<a_{u+1}<\dots<a_v<c_u \leq a_{v+1}$. Delete the indeterminates in
    the entries $(e,f)$, where $e<a_v$ and $f<b_u$. Update the corner
    $(a_u,b_u)$ with the values $(a_v,b_u)$. Relabel the remaining corners
    according to our conventions.
    \item If $(c_u,d_u)$ is not an inside corner, then stop. Otherwise, suppose
    that $c_u<c_{u+1}<\dots<c_v<a_{u+1} \leq c_{v+1}$. Delete the indeterminates
    in the entries $(e,f)$, where $e>c_u$ and $f>d_v$. Update the corner
    $(c_u,d_u)$ with the values $(c_u,d_v)$. Relabel the remaining corners
    according to our conventions.
    \item Update the value of $u$ to $u+1$ and repeat.
  \end{enumerate}
  We let $\ti{Y}$ be the ladder obtained when the induction stops.
\end{defn}


\begin{ex} Shown below is a two-sided ladder where $a_1>c_1$, with $h=1$, $k=4$, and its associated spine, which has only one upper inside corner (at ($c_1, d_4)$).

 \begin{center}
    \begin{picture}(152,107) 
      \put(22, 2){\line(1,0){30}}
      \put(52,2){\line(0,1){40}}
      \put(52,42){\line(1,0){10}}
      \multiput(52,44)(0,8){4}{\line(0,1){4}}
      \multiput(52,72)(8,0){4}{\line(1,0){4}}
      \put(62,42){\line(0,1){10}}
      \put(62,52){\line(1,0){10}}
      \put(72,52){\line(0,1){10}}
      \put(72,62){\line(1,0){10}}
      \put(82,62){\line(0,1){10}}
      \put(82,72){\line(1,0){30}}
      \put(112,72){\line(0,1){20}}
      \put(42,92){\line(1,0){70}}
      \put(42,22){\line(0,1){70}}
      \put(22,22){\line(1,0){20}}
      \put(22, 2){\line(0,1){20}}
    \end{picture}
  \end{center}
  
\end{ex}
\vskip.25in

\begin{thm}[Thin Ladder Theorem] \label{thm:thin} 
  Let $Y$ be a two-sided 2-connected ladder, with $h\geq 1$ lower inside corners and $k\geq1$ upper inside corners, such that $(a_i,b_i)\nleq (c_j,d_j)$
  for all $1 \leq i \leq h$ and $1 \leq j \leq k$.
  Let $R = R_2(Y)$. Then $|\s_0(R)| \leq 2$.
\end{thm}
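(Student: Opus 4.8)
The plan is to follow the template of Theorem~\ref{thm:thick}: replaying the proof of Proposition~\ref{prop:4corners} — whose inductive hypothesis we supply by inducting on the number $h+k$ of inside corners, so that all $2$-connected ladders without coincidental corners and with fewer inside corners (one-sided ones by Theorem~\ref{thm:onesided}, two-sided ones by Theorems~\ref{thm:thick} and~\ref{thm:thin} and the treatment of the mixed case) are already known to have only trivial semidualizing modules — one is reduced, in the thin case, to showing that the two surviving candidate classes
\[
  [M_5]=\gl_1[\q_1]\qquad\text{and}\qquad [M_6]=-\gl_{h+1}[\q'_{h+1}]=[\omega_R]-[M_5]
\]
are trivial. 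These occur at all only when every inside corner of $Y$ lies on one common antidiagonal, and since $[M_5]+[M_6]=[\omega_R]$ it suffices to derive a contradiction whenever $M_5$ and $M_6$ are semidualizing with $\gl_1\neq 0$.

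The first step is to reduce to the case that $Y$ is a \emph{spine}. Passing from $Y$ to its spine $\ti Y$ is a finite sequence of block deletions, each of which — exactly as in the four localizations carried out inside the proof of Proposition~\ref{prop:4corners} — is realized by inverting one boundary indeterminate $x_{ij}$ and identifying $R_{x_{ij}}$ with a localization of a polynomial extension of the ladder determinantal ring of the shrunk ladder. The induced surjections $\Cl(R)\to\Cl(R_{x_{ij}})\xrightarrow{\cong}\Cl(\ti R)$ respect semidualizing modules by \cite[Lemma~3.10(a)]{SSW}, the shrunk ladders stay $2$-connected and free of coincidental corners, and $\ti Y$ has no more inside corners than $Y$; if $\ti Y\neq Y$ this strictly decreases $h+k$ and the induction closes, so we may assume $Y$ is a spine, its inside corners strictly alternating along the antidiagonal. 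Reflecting along the antidiagonal interchanges the two arms of the spine together with $\q_1\leftrightarrow\q'_{h+1}$ and $\gl_1\leftrightarrow\gl_{h+1}$, so we may further assume $a_1<c_1$ and $\gl_1\geq\gl_{h+1}$; and when $\gl_1<0$ we rewrite $[M_5]$ by the relation $[\q_1]+[\q'_1]+\sum_{j\in I_1}[\p_j]=0$ of Fact~\ref{omega}(3), as in the later cases of Theorems~\ref{thm:onesided} and~\ref{thm:thick}. This leaves the principal case $\gl_1>0$.

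When $\gl_1>0$, Lemmas~\ref{lem:purepower} and~\ref{lem:intersection} identify $M_5=\q_1^{\gl_1}$, together with $M_6$ and $\omega_R$, with explicit monomial ideals of $R$. I would then exhibit a single straightening relation $x_{i_1j_1}x_{i_2j_2}=x_{i_1j_2}x_{i_2j_1}$ of $R_2(Y)$ witnessing that the multiplication map $\mu\colon M_5\otimes_R M_6\to M_5M_6$ is not injective — in the spirit of the displayed computation in Case~\ref{case:gamma1positive} of Theorem~\ref{thm:thick}, the necessary indeterminates being present precisely because $a_1<c_1$, $b_1>d_1$, and every inside corner lies on the antidiagonal — contradicting Fact~\ref{fact:multisiso}. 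In the borderline sub-cases where $\mu$ is in fact injective (these do arise, as in the final case of Theorem~\ref{thm:onesided}), I would instead invoke $\Tor 1{M_5}{M_6}\cong\Tor 1{\Hom{M_6}{\omega_R}}{M_6}=0$ from Fact~\ref{fact:tensorgivesomega} and produce a minimal generator of a syzygy of $M_6$ that persists after applying $-\otimes_R M_5$, or show the lcm map fails to give a bijection of minimal generating sets, as in Case~\ref{case:mingen} and the last case of Theorem~\ref{thm:onesided}. Either way $M_5$ and $M_6$ cannot both be nontrivial, so $|\s_0(R)|\leq 2$.

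The main obstacle will be the bookkeeping around the spine reduction: verifying that each block deletion genuinely arises as a localization at a variable for which the class-group map is a surjection preserving semidualizing modules, that the relabeled merged corner is again an honest inside corner, that $2$-connectedness and the no-coincidental-corners condition survive, and that the spine has strictly fewer inside corners whenever $Y$ is not itself a spine. Inside the spine case, the remaining task is to pin down exactly which $x_{ij}$ occur in $Y$ so that the explicit non-injectivity (or non-liftability of a syzygy) holds uniformly across all spine orientations ($h=k$ versus $h=k\pm1$, and which arm is longer); once the geometry is fixed the arithmetic with the $\gl_i$ and $\gd_j$ is routine, and antidiagonal reflection roughly halves the configurations to check.
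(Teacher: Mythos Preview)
Your reduction to the spine has a gap that breaks the induction. Even if each spine step were realized by inverting a boundary variable (which the paper does not claim, and which is not clear --- the spine operation trims \emph{interior} staircase blocks, not the outer corner blocks handled by the four localizations in Proposition~\ref{prop:4corners}), the inductive hypothesis on $\tilde R=R_2(\tilde Y)$ only tells you that the \emph{image} of $[M_5]$ in $\Cl(\tilde R)$ is $0$ or $[\omega_{\tilde R}]$. That does not force $[M_5]=\gl_1[\q_1]$ to be trivial in $\Cl(R)$: the quantities $\tilde\gl_i,\tilde\gd_j$ are computed from the spine's corners, not those of $Y$, and in general $\tilde\gl_1\neq\gl_1$. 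Proposition~\ref{prop:4corners} succeeds because its four localizations are played off against one another to pin down every coefficient; a further localization to the spine does not add a constraint of that kind.

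More seriously, your endgame for the spine case will not work. In a thin ladder the generators of $M_5$ lie in the extremal row (or column) of one end of the ladder and those of $M_6$ in the extremal row (or column) of the opposite end; these variable sets are disjoint and no $2$-minor of $Y$ relates them directly. Thus the straightening trick that made $\mu$ non-injective in Theorem~\ref{thm:thick} has no analogue here, the first syzygies of $M_6$ involve only variables near $M_6$, and the lcm argument of Case~\ref{case:mingen} in Theorem~\ref{thm:onesided} does not apply either. The paper's actual argument does \emph{not} reduce $Y$ to its spine: it uses the spine $\tilde Y$ only as a scaffold to write down the first $2\tilde h$ differentials of a minimal free resolution of $M_5$ over $\tilde R$, observes that these sit as upper-left blocks in a minimal resolution over $R$, and then exhibits an explicit cycle showing $\Tor{2\tilde h}{M_5}{M_6}\neq 0$ (with the index shifted by $\pm 1$ according to the signs of $\gl_1,\gl_{h+1}$ and whether $\kappa_1=k$). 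The need for Tor in degree roughly $2\tilde h$ --- one must traverse the entire spine before the two ends interact --- is the essential phenomenon your proposal misses.
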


\begin{proof}
  We prove by induction on $h+k$. 
  By Proposition~\ref{prop:4corners}, we only need to show that
  $[M_5],[M_6]$ must be trivial semidualizing modules, where
  \begin{align*}
    [M_5]&=\gl_1[\q_1] \quad \text{and}\\
    [M_6]&=\gl_{h+1}[\q_{h+1}]+\sum_{j=\kappa_1}^k \gl_{h+1}[\p_j]
    = -\gl_{h+1}[\q'_{h+1}]
    = [\omega_R]-[M_5],
  \end{align*}
  and all inside corners lie on the same antidiagonal.
  
  It suffices to show that $\gl_1\gl_{h+1} \neq 0$ leads to a
  contradiction. By reflection along the antidiagonal, we may assume that
  $a_1>c_1$. If $\gl_1>0$, then $[M_5]=[\q_1^{\gl_1}]$, and we will
  let $M_5 = \q_1^{\gl_1} = (x_{a_0b_1},x_{a_0b_1+1},\dots,
  x_{a_0b_0})^{\gl_1}$. If $\gl_1<0$, then $[M_5] = -\gl_1
  \left( [\q'_1]+\sum_{j=1}^{\kappa_2}[\p_j] \right) = \left|\gl_1\right|
  [(x_{a_0b_1},x_{a_0+1b_1},\dots,x_{c_1b_1})]$. We then
  let $M_5=(x_{a_0b_0},x_{a_0+1b_0},\dots,x_{c_1b_0})^{|\gl_1|}$.
  Similarly, if $\gl_{h+1}>0$, then we let $M_6=(x_{a_{h+1}b_{h+1}},
  x_{a_{h+1}b_{h+1}+1},\dots,x_{a_{h+1}d_k})^{\gl_{h+1}}$,
  and if $\gl_{h+1}<0$, then finally we let $M_6=
  (x_{a_hb_{h+1}},x_{a_h+1b_{h+1}},\dots,x_{a_{h+1}b_{h+1}})^{|\gl_{h+1}|}$.
  
  Consider the case when $\gl_1,\gl_{h+1}<0$. Let $\ti{Y}$ be the spine of
  $Y$. We construct part of a minimal free resolution of $M_5$ over
  $\ti{R}=R_2(\ti{Y})$ given by
  \[
    0 \xleftarrow{\partial_0} \ti{R}^{\gb_0}
    \xleftarrow{\partial_1} \ti{R}^{\gb_1}
    \xleftarrow{\ti{\partial}_2} \ti{R}^{\ti{\gb}_2}
    \xleftarrow{\ti{\partial}_3} \cdots
    \xleftarrow{\ti{\partial}_{2\ti{h}-1}} \ti{R}^{\ti{\gb}_{2\ti{h}-1}}
    \xleftarrow{\ti{\partial}_{2\ti{h}}} \ti{R}^{\ti{\gb}_{2\ti{h}}}
    \xleftarrow{\ti{\partial}_{2\ti{h}+1}} \cdots,
  \]
  where
  {\allowdisplaybreaks
  \begin{align*}
    \partial_0 &= \left( \begin{matrix}
      x_{a_0b_0}^{|\gl_1|} & x_{a_0b_0}^{|\gl_1|-1}x_{a_0+1b_0} & \cdots
    \end{matrix} \right),\\
    \partial_1 &= \left( \begin{matrix}
      x_{a_0+1\ti{d}_1-1} & -x_{a_0+1\ti{d}_1} & \cdots\\
      -x_{a_0\ti{d}_1-1} & x_{a_0\ti{d}_1} & \cdots\\
      0 & 0 & \cdots\\
      \vdots & \vdots & \cdots\\
      0 & 0 & \cdots
    \end{matrix} \right),\\
    \ti{\partial}_2 &= \left( \begin{matrix}
      x_{\ti{a}_1\ti{d}_1} & x_{\ti{a}_1+1\ti{d}_1} & \cdots\\
      x_{\ti{a}_1\ti{d}_1-1} & x_{\ti{a}_1+1\ti{d}_1-1} & \cdots\\
      0 & 0 & \cdots\\
      \vdots & \vdots & \cdots\\
      0 & 0 & \cdots
    \end{matrix} \right),\dots\\
    \ti{\partial}_{2\ti{h}-1} &= \left( \begin{matrix}
      x_{\ti{a}_{\ti{h}-1}+1\ti{d}_{\ti{h}}-1} &
      -x_{\ti{a}_{\ti{h}-1}+1\ti{d}_{\ti{h}}} & \cdots\\
      -x_{\ti{a}_{\ti{h}-1}\ti{d}_{\ti{h}}-1} &
      x_{\ti{a}_{\ti{h}-1}\ti{d}_{\ti{h}}} & \cdots\\
      0 & 0 & \cdots\\
      \vdots & \vdots & \cdots\\
      0 & 0 & \cdots
    \end{matrix} \right),\\
    \ti{\partial}_{2\ti{h}} &= \left( \begin{matrix}
      x_{\ti{a}_{\ti{h}}\ti{d}_{\ti{h}}} &
      x_{\ti{a}_{\ti{h}}+1\ti{d}_{\ti{h}}} & \cdots\\
      x_{\ti{a}_{\ti{h}}\ti{d}_{\ti{h}}-1} &
      x_{\ti{a}_{\ti{h}}+1\ti{d}_{\ti{h}}-1} & \cdots\\
      0 & 0 & \cdots\\
      \vdots & \vdots & \cdots\\
      0 & 0 & \cdots
    \end{matrix} \right)
    = \left( \begin{matrix}
      x_{a_h\ti{d}_{\ti{h}}} &
      x_{a_h+1\ti{d}_{\ti{h}}} & \cdots\\
      x_{a_h\ti{d}_{\ti{h}}-1} &
      x_{a_h+1\ti{d}_{\ti{h}}-1} & \cdots\\
      0 & 0 & \cdots\\
      \vdots & \vdots & \cdots\\
      0 & 0 & \cdots
    \end{matrix} \right),\dots
  \end{align*}}
  This minimal free resolution forms part of a minimal free resolution of $M_5$
  over $R=R_2(Y)$ given by
  \[
    F_{\bullet} = 0 \xleftarrow{\partial_0} R^{\gb_0}
    \xleftarrow{\partial_1} R^{\gb_1}
    \xleftarrow{\partial_2} R^{\gb_2}
    \xleftarrow{\partial_3} \cdots
    \xleftarrow{\partial_{2\ti{h}-1}} R^{\gb_{2\ti{h}-1}}
    \xleftarrow{\partial_{2\ti{h}}} R^{\gb_{2\ti{h}}}
    \xleftarrow{\partial_{2\ti{h}+1}} \cdots,
  \]
  with $\ti{\partial}_2,\ti{\partial}_3,\dots$ being represented by upper
  left submatrices of the matrices representing $\partial_2,\partial_3,\dots$.
  In $F_{\bullet} \otimes M_6$, we have
  \[
    \mathbf{x} = \left( \begin{matrix}
      x_{a_h+1b_{h+1}}^{|\gl_{h+1}|}
      & -x_{a_h+1b_{h+1}}^{|\gl_{h+1}|-1} x_{a_hb_{h+1}}
      & 0 & \cdots & 0 \end{matrix} \right)^T \in \ker(\partial_{2\ti{h}}).
  \]
  However, $\mathbf{x} \notin \im(\partial_{2\ti{h}+1})$ since $F_{\bullet}$ is
  a minimal resolution. Hence $\tor_{2\ti{h}}^R(M_5,M_6) \neq 0$, contradicting
  Fact~\ref{fact:tensorgivesomega}.
  
  Now suppose that $\kappa_1\neq k$. If $\gl_1,\gl_{h+1}>0$, then we also use
  $\tor_{2\ti{h}}^R(M_5,M_6)$ to reach a contradiction. If $\gl_1>0$ and
  $\gl_{h+1}<0$, then we use $\tor_{2\ti{h}-1}^R(M_5,M_6)$; and if $\gl_1<0$
  and $\gl_{h+1}>0$, then we use $\tor_{2\ti{h}+1}^R(M_5,M_6)$.
  
  Finally, suppose that $\kappa_1=k$. If $\gl_1\gl_{h+1}<0$, then we use
  $\tor_{2\ti{h}-1}^R (M_5,M_6)$; and if $\gl_1,\gl_{h+1}>0$, then we use
  $\tor_{2\ti{h}-2}^R(M_5,M_6)$ to reach a contradiction.
\end{proof}

\begin{thm}[Two-Sided Ladder Theorem] \label{thm:mixed} 
  Let $Y$ be a 2-connected ladder, with $h$ lower inside corners and $k$ upper inside corners, such that $(a_i,b_i)\neq (c_j,d_j)$
  for all $1 \leq i \leq h$ and $1 \leq j \leq k$.
  Then $|\s_0(R_2(Y))| \leq 2$.
\end{thm}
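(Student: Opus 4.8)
The plan is to prove the theorem by induction on the total number $h+k$ of inside corners of $Y$, with Theorem~\ref{thm:onesided} as the base case and Proposition~\ref{prop:4corners} together with Theorems~\ref{thm:thick} and~\ref{thm:thin} driving the inductive step. If $h=0$ or $k=0$, then $Y$ is path-connected by Lemma~\ref{lem:connectivity}, hence one-sided in the sense of Definition~\ref{defn:onetwosided}, and Theorem~\ref{thm:onesided} gives $|\s_0(R_2(Y))|\leq 2$. So assume $h,k\geq 1$ and that the conclusion holds for every $2$-connected ladder with no coincidental corners and fewer than $h+k$ inside corners. This inductive hypothesis is exactly the hypothesis of Proposition~\ref{prop:4corners}, so $|\s_0(R)|\leq 4$ and every nontrivial semidualizing class of $R$ lies in the explicit list $\{[M_1],\dots,[M_6]\}$ produced there, subject to the trichotomy in its concluding summary.

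Next I would treat the three cases of that trichotomy. If $(a_i,b_i)\lneqq(c_j,d_j)$ for all $1\le i\le h$ and $1\le j\le k$, then $Y$ is a thick ladder and Theorem~\ref{thm:thick} gives $|\s_0(R)|\leq 2$; if $(a_i,b_i)\nleq(c_j,d_j)$ for all $i,j$, then $Y$ is a thin ladder and Theorem~\ref{thm:thin} gives $|\s_0(R)|\leq 2$. (Since the proofs of Theorems~\ref{thm:thick} and~\ref{thm:thin} themselves appeal to Proposition~\ref{prop:4corners}, these are really steps in the same induction, and invoking them here at a ladder with $h+k$ corners is legitimate because the present inductive hypothesis supplies exactly what Proposition~\ref{prop:4corners} requires.) In the remaining case the only nontrivial candidates are $[M_1]=\sum_{i=\eta_1}^{\eta_2+1}\lambda_i[\q_i]$ and $[M_2]=[\omega_R]-[M_1]=\sum_{j=\kappa_1}^{\kappa_2}\delta_j[\p_j]$, where the analysis in Proposition~\ref{prop:4corners} gives $\eta_1\leq\eta_2$ and $\kappa_1\leq\kappa_2$ (otherwise $[M_1]=0$ and there is nothing to prove), $(a_i,b_i)\lneqq(c_j,d_j)$ for all $\eta_1\leq i\leq\eta_2$ and $\kappa_1\leq j\leq\kappa_2$, and $(\eta_1,\eta_2,\kappa_1,\kappa_2)\neq(1,h,1,k)$ since we are not in the thick case. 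I would finish this case by localizing $R$ at a suitable product of variables and deleting the corresponding rows and columns, iterating the block-deletion constructions from the proof of Proposition~\ref{prop:4corners} so as to peel off blocks at all four ends and arrive at a $2$-connected ladder $\ti Y$ whose inside corners are exactly those of $Y$ indexed $\eta_1,\dots,\eta_2$ (lower) and $\kappa_1,\dots,\kappa_2$ (upper). Then $\ti Y$ has no coincidental corners and strictly fewer than $h+k$ inside corners, so $|\s_0(\ti R)|\leq 2$ by the inductive hypothesis; since $\Cl(R)\to\Cl(\ti R)$ respects semidualizing classes by~\cite[Lemma 3.10(a)]{SSW} and is, by the choice of localizations, injective on $\Span([\q_{\eta_1}],\dots,[\q_{\eta_2+1}],[\p_{\kappa_1}],\dots,[\p_{\kappa_2}])$, the image of $[M_1]$ must be $[\ti R]$ or $[\omega_{\ti R}]$, which forces $[M_1]\in\{0,[\omega_R]\}$ and hence $[M_2]$ trivial. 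This covers all cases and completes the induction.

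The class-group bookkeeping---tracking which $[\q_i]$ and $[\p_j]$ generate the kernels of the successive maps $\Cl(R)\to\Cl(\ti R)$---is routine and follows the pattern already established in Proposition~\ref{prop:4corners}. The one genuine obstacle is making the reduction in the ``otherwise'' case airtight: one must write down an explicit sequence of variables to invert so that the resulting $\ti Y$ is genuinely $2$-connected, has no coincidental corners, and retains precisely the inside corners indexed $\eta_1,\dots,\eta_2$ and $\kappa_1,\dots,\kappa_2$, and then confirm that the composite class-group map is injective on the span of the classes appearing in $[M_1]$. Here the fact, supplied by Proposition~\ref{prop:4corners} in this case, that all the remaining inside corners of $Y$ lie on a common antidiagonal is what makes the deleted blocks collapsible and keeps $\ti Y$ a legitimate $2$-connected ladder.
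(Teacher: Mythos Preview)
Your induction setup and the handling of the thick and thin cases are correct and match the paper. The gap is in your treatment of the ``otherwise'' case.

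You propose to localize so as to obtain a smaller $2$-connected ladder $\ti Y$ whose inside corners are precisely those indexed $\eta_1,\dots,\eta_2$ (lower) and $\kappa_1,\dots,\kappa_2$ (upper), and then invoke the inductive hypothesis on $\ti Y$. But the block-deletions from Proposition~\ref{prop:4corners} cannot produce such a ladder while preserving the injectivity you need. Concretely, after the antidiagonal reflection one is in the situation $\eta_1=1$, $\kappa_2=k$, $\eta_2<h$, $\kappa_1>1$. To remove the unwanted lower corners $(a_i,b_i)$ with $i>\eta_2$ you would iterate the localization at $x_{a_hb_{h+1}}$, but each such step kills $[\p_{\kappa_1}],\ldots,[\p_k]$---precisely the generators carrying $[M_2]$; to remove the unwanted upper corners $(c_j,d_j)$ with $j<\kappa_1$ you would iterate the localization at $x_{c_1d_0}$, but each such step kills $[\q_1],\ldots,[\q_{\eta_2+1}]$---precisely the generators carrying $[M_1]$. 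The corners you wish to discard sit in the middle of the ladder, interleaved with those you want to keep, and none of the four corner localizations peels them off without destroying the span on which you claim injectivity. Your final paragraph flags this step as the crux, but it is not routine bookkeeping: the reduction as described does not go through.

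The paper takes a different route in the ``otherwise'' case. After reducing by reflection to $\eta_2\neq h$, $\kappa_2=k$, it records the inequalities $a_i+b_i<a_0+b_0=a_{h+1}+b_{h+1}$ for $1\leq i\leq\eta_2$ and $c_j+d_j>a_{h+1}+b_{h+1}$ for $\kappa_1\leq j\leq k$, deletes only the columns $b_{h+1},\ldots,b_{\eta_2+1}-1$, and then runs the explicit multiplication-map argument of Case~\ref{case:somedeltaneg} in the proof of Theorem~\ref{thm:thick} to show that $\mu\colon M_1\otimes_R M_2\to M_1M_2$ is not injective. No appeal to the inductive hypothesis on $\ti Y$ is made at this point; the conclusion comes from a direct ideal-theoretic computation rather than from a further class-group reduction.
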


\begin{proof}
  We will argue by induction on $h+k$. By Theorem~\ref{thm:onesided},
  we may assume that $h,k>0$. The case $h=k=1$ is given by
  Theorems~\ref{thm:thick} and \ref{thm:thin}. In the induction step,
  by Proposition~\ref{prop:4corners} we only need to show that $[M_1],[M_2]$ must be
  trivial semidualizing modules, where
  \[
    [M_1] = \sum_{i=\eta_1}^{\eta_2+1} \gl_i[\q_i] \quad \text{and} \quad
    [M_2] = \sum_{j=\kappa_1}^{\kappa_2} \gd_j[\p_j] = [\omega_R]-[M_1],
  \]
  $\eta_1\leq \eta_2$, $\kappa_1\leq \kappa_2$, $\eta_1,\kappa_1$ are not both 1 (equivalently, we
  cannot have both $\eta_2=h$ and $\kappa_2=k$), and the corners $(a_0,b_0)$,
  $(a_{h+1},b_{h+1})$ together with all inside corners, except $(a_i,b_i),
  (c_j,d_j)$ for $\eta_1 \leq i \leq \eta_2$ and $\kappa_1 \leq j \leq \kappa_2$,
  all lie on the same antidiagonal.
  
  Note that we cannot have both $\eta_2\neq h$ and $\kappa_2\neq k$. Otherwise, we would
  have $a_h\leq c_{\kappa_1} \leq c_{\kappa_2} < c_k$ and $b_h < b_{\eta_2} \leq
  b_{\eta_1} \leq d_k$, a contradiction since $(a_h,b_h)$ and $(c_k,d_k)$ should
  lie on the same antidiagonal.
  
  By reflection along the antidiagonal, we may assume that $\eta_2 \neq h$ and
  $\kappa_2=k$. In this case, we have $a_i \leq a_{\eta_2} \leq c_1 \leq c_k$
  for all $1 \leq i \leq \eta_2$, and $b_i \leq b_1 \leq d_k \leq d_1$ for all
  $i \geq 1$. So $(a_i,b_i) \lneqq (c_j,d_j)$ for all $1 \leq i \leq \eta_2$ and
  $1\leq j \leq k$.
  
  We also note that for $\kappa_1 \leq j \leq \kappa_2=k$, we have $c_j \geq a_h$ and
  $d_j \geq d_k \geq b_1 > b_h$. Hence $c_j+d_j>a_h+b_h=a_{h+1}+b_{h+1}$ for all
  $\kappa_1 \leq j \leq k$.
  
  Now suppose that $a_i+b_i\geq a_0+b_0=a_{h+1}+b_{h+1}$, where $1 \leq i
  \leq \eta_2$. Since $\eta_2 \neq h$, i.e.\ $\kappa_1 \neq 1$, the corners $(c_1,d_1)$,
  $(a_0,b_0)$ and $(a_{h+1},b_{h+1})$ lie on the same antidiagonal. This is
  a contradiction, since $(a_i,b_i) \lneqq (c_1,d_1)$. Hence $a_i+b_i
  <a_0+b_0=a_{h+1}+b_{h+1}$ for all $1 \leq i \leq \eta_2$.
  
  Finally, let $\ti{Y}$ be the ladder obtained by deleting from $Y$ columns
  $b_{h+1},b_{h+1}+1,\dots,b_{\eta_2+1}-1$. We can then use arguments similar to
  those in Theorem~\ref{thm:thick}, Case~\ref{case:somedeltaneg} on $\ti{Y}$
  to finish the induction.
\end{proof}

We will generalize Theorem~\ref{thm:onesidedpathconnected} and
Theorem~\ref{thm:mixed} in \cite{SWSeSp2}. Here we end with some examples
to illustrate our results and point to our future work.

\begin{ex} \label{multipleladders} We consider $R = R_t(-)$ for the ladders shown earlier.
\begin{enumerate}
\item If \ref{eq180304a} is the ladder on page~\pageref{eq180304a}, then $\s_0(R_2(L)) = \{[R], [\omega_R] \}$, by Theorem \ref{thm:mixed} and \cite[Proposition 2.5]{Co};
\item If $O$ is the one-sided ladder on page~\pageref{ladders:OT}, then $\s_0(R_3(O)) = \{[R]\}$ and $\s_0(R_2(O)) = \{[R], [\omega_R]\}$, by Theorem \ref{thm:onesided} and \cite[Example 4.10]{Co};
\item For the ladder $T$ with a coincidental inside corner on page~\pageref{ladders:OT}, we show in \cite{SWSeSp2} that $\s_0(R_2(T)) = \{ [R], [\omega_R], [(x_{12}, x_{13})], [(x_{31}, x_{32})] \}$.
\end{enumerate}
\end{ex}

\providecommand{\bysame}{\leavevmode\hbox to3em{\hrulefill}\thinspace}
\providecommand{\MR}{\relax\ifhmode\unskip\space\fi MR }
\providecommand{\MRhref}[2]{%
  \href{http://www.ams.org/mathscinet-getitem?mr=#1}{#2}
}
\providecommand{\href}[2]{#2}

\end{document}